\newtheorem{thm}{Theorem}[section]
\newtheorem{prop}[thm]{Proposition}
\newtheorem{lem}[thm]{Lemma}
\newtheorem{cor}[thm]{Corollary}
\theoremstyle{definition}
\newtheorem{Def}[thm]{Definition}
\theoremstyle{remark}
\newtheorem{rem}[thm]{Remark}
\numberwithin{equation}{section}
\DeclareMathOperator{\Pic}{Pic}
\DeclareMathOperator{\Sym}{Sym}
\DeclareMathOperator{\Ima}{Im}
\DeclareMathOperator{\Id}{Id}
\DeclareMathOperator{\Aut}{Aut}
\DeclareMathOperator{\Hom}{Hom}
\DeclareMathOperator{\Syz}{Syz}
\DeclareMathOperator{\HK}{HK}
\DeclareMathOperator{\length}{length}
\DeclareMathOperator{\chara}{char}
\begin{document}

\title[Complete families of smooth space curves]
{Complete families of smooth space curves and strong semistability}
\author{Olivier Benoist}
\address{Institut de Recherche Math\'ematique Avanc\'ee\\
UMR 7501, Universit\'e de Strasbourg et CNRS\\
7 rue Ren\'e Descartes\\
67000 Strasbourg, FRANCE}
\email{olivier.benoist@unistra.fr}

\renewcommand{\abstractname}{Abstract}
\begin{abstract}
We construct the first non-trivial examples of complete families of non-degenerate smooth space curves, and show that the base of such a family cannot be a rational curve.
  Both results
rely on the study of the strong semistability of certain vector bundles.
\end{abstract}
\maketitle

\section*{Introduction}\label{intro}

We work over an algebraically closed field $\mathbbm{k}$. A \textit{curve} is
a projective connected one-dimensional variety over $\mathbbm{k}$.
If $B$ is an integral variety over
$\mathbbm{k}$, a \textit{family of smooth space curves} over $B$ is a closed subvariety 
$\mathcal{C}\hookrightarrow \mathbb{P}^3_B:=\mathbb{P}^3\times B$ such that $\mathcal{C}\to B$ is a smooth family of curves.
Equivalently, it is a morphism from $B$ to the Hilbert scheme of smooth curves in $\mathbb{P}^3$.
Such a family will be said to be \textit{trivial} if all its fibers are isomorphic as subvarieties of
$\mathbb{P}^3$; in other words, if the induced morphism from $B$ to the Hilbert scheme of $\mathbb{P}^3$ is constant. It is said to be \textit{isotrivial} if all its fibers are isomorphic as abstract curves. We are interested in \textit{complete} families: those
whose base $B$ is proper.

The family of lines parametrized by the Grassmannian is a
non-trivial complete family of smooth space curves.
It is also easy to construct (ne\-ces\-sarily iso\-trivial) non-trivial complete families whose members are plane curves
\cite[Proposition 2.1]{Oolbir}. For this reason, we will restrict our attention 
to families parametrizing \textit{non-degenerate} space curves, that is curves whose linear
span is $\mathbb{P}^3$. 

Non-trivial complete
families of non-degenerate smooth space curves have been studied by Chang and Ran 
\cite{ChangRan1, ChangRan2}. They showed that the curves parametrized by the 
family can be neither rational nor elliptic \cite[Theorem 3]{ChangRan2}. They also proved that every such family comes by base-change from a family over a curve  \cite[Theorem~1]{ChangRan2},
so that one may restrict the study to this case.  

However, they do not provide examples
of such families. The existence of 
non-trivial complete families of non-degenerate smooth space curves is also stated
as an open question in \cite[p. 57]{HM}.
Our main goal is to construct examples. 

\begin{thm}\label{main}
\begin{enumerate}[(i)]
\item
There exist non-trivial complete families of non-degenerate smooth space curves over any elliptic curve.
\item
If $\mathbbm{k}$ has characteristic $p$ with $p\equiv \pm 1 [8]$,
there is a non-trivial complete family of non-degenerate smooth space curves over a smooth curve of genus $\geq 2$ that does not come by base-change from a family over a curve of genus $\leq 1$.
\end{enumerate}
\end{thm}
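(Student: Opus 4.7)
The plan is first to translate the problem into a question about vector bundles on the base. If $\mathcal{C}\hookrightarrow\mathbb{P}^3\times B$ is a family of non-degenerate smooth space curves and $\mathcal{L}:=\mathcal{O}_{\mathbb{P}^3}(1)|_{\mathcal{C}}$, then $\pi_*\mathcal{L}\cong\mathcal{O}_B^{\oplus 4}$. Conversely, any family of polarized smooth genus-$2$ curves $(\mathcal{C}/B,\mathcal{L})$ of relative degree $5$ whose pushforward $\pi_*\mathcal{L}$ is \emph{projectively trivial}, i.e.\ isomorphic to $\mathcal{M}^{\oplus 4}$ for some line bundle $\mathcal{M}$ on $B$, already yields such a family: twisting $\mathcal{L}$ by $\pi^*\mathcal{M}^{-1}$ makes $\pi_*\mathcal{L}$ trivial, and the relative complete linear system then embeds $\mathcal{C}$ into $\mathbb{P}^3\times B$ non-degenerately (very ample on fibers since $5=2g+1$). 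The whole problem thus reduces to constructing such a pair.

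For part (i), I would take $B$ to be an elliptic curve embedded, up to translation, inside $\Pic^5(C)$ for a well-chosen smooth genus-$2$ curve $C$; it suffices that the Jacobian $J(C)$ be isogenous to a product of two elliptic curves, which can be arranged over any algebraically closed $\mathbbm{k}$. With $\mathcal{C}=C\times B\to B$ the trivial family and $\mathcal{L}$ the restriction to $C\times B$ of a Poincar\'e line bundle $\mathcal{P}$ on $C\times\Pic^5(C)$, the analogous pushforward $(p_2)_*\mathcal{P}$ on the full Jacobian is a semihomogeneous rank-$4$ bundle by Fourier--Mukai theory, so its restriction $\pi_*\mathcal{L}$ to $B$ is semistable. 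Invoking Atiyah's classification of vector bundles on elliptic curves, one checks for a well-chosen $B$ that every indecomposable summand of $\pi_*\mathcal{L}$ is a line bundle of the same degree, and the semihomogeneity on $J(C)$ forces these line bundles to be mutually isomorphic, yielding $\pi_*\mathcal{L}\cong\mathcal{M}^{\oplus 4}$. Non-triviality is then automatic: $\mathcal{L}_b$ varies through an elliptic curve inside $\Pic^5(C)$ while $\Aut(C)$ is finite, so only finitely many $b\in B$ can give projectively equivalent embeddings in $\mathbb{P}^3$.

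For part (ii), I would pull back the family from (i) along a degree-$p$ cover $B'\to B$, for instance a sufficiently ramified Artin--Schreier cover, chosen so that $g(B')\geq 2$. Projective triviality of $\pi_*\mathcal{L}$ is preserved by base change, so the pulled-back family is still a complete family of non-degenerate smooth space curves. The substantive point is to show it does \emph{not} come by base change from a family over a curve $B_0$ of genus $\leq 1$. Assuming such a descent, $\pi_*\mathcal{L}'$ on $B'$ would be the pullback of a vector bundle $V_0$ on $B_0$; strong semistability of $V_0$ under Frobenius pullbacks would then impose numerical relations between the degree of $B'\to B_0$ and the invariants of $\pi_*\mathcal{L}$. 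The condition $p\equiv\pm 1\pmod 8$ is precisely what produces an explicit destabilizing subbundle of a rank-$2$ bundle on $C$ (arising, for example, from its hyperelliptic structure) that becomes unstable after one Frobenius pullback, contradicting the strong semistability required by any such descent.

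The main obstacle in both parts is the concrete identification and control of $\pi_*\mathcal{L}$ and its Frobenius pullbacks. In (i) one must carry out the Fourier--Mukai computation on $J(C)$ and apply Atiyah's classification carefully enough to conclude projective triviality; in (ii) one must locate an explicit subbundle whose slope jumps only after one Frobenius pullback, and the arithmetic condition $p\equiv\pm 1\pmod 8$ is exactly what lets this subbundle exist. Once these bundle-theoretic inputs are in place, the remaining Hilbert-scheme statements (non-triviality and non-descent) are formal.
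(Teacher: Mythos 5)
Your opening reduction is sound and matches the paper's strategy: it suffices to produce a polarized family of genus~$2$, degree~$5$ curves whose pushforward bundle is a direct sum of four isomorphic line bundles, and the non-triviality argument via finiteness of $\Aut(C)$ is fine. But the key bundle-theoretic step of part (i) fails as stated. The Picard bundle $\mathcal{E}=p_{2*}\mathcal{P}$ on $A=\Pic^5(C)$ is \emph{not} semihomogeneous: it has rank $4$ and $c_1(\mathcal{E})=-\Theta$, and semihomogeneity would force $\mathrm{ch}(\mathcal{E})=4e^{-\Theta/4}$, whose degree-two part is not integral; so semistability of $\mathcal{E}|_B$ does not come for free from Fourier--Mukai theory. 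More seriously, for any elliptic curve $B$ genuinely embedded (up to translation) in $A=(E\times F)/\Gamma$ with $E$, $F$ non-isogenous, one has $\deg(\mathcal{E}|_B)=-\Theta\cdot B=-n$ with $n$ odd (this is the paper's Lemma \ref{degline}), which is prime to $4$; hence $\mathcal{E}|_B$ can \emph{never} be of the form $\mathcal{M}^{\oplus 4}$, and indeed if semistable it is stable and simple. There is no ``well-chosen $B$'' avoiding this. The missing idea is a further base change by an isogeny $B'\to B$, and showing that a stable bundle on an elliptic curve becomes a sum of isomorphic line bundles after a suitable isogeny is itself a nontrivial lemma (the paper's Proposition \ref{sssell}, which in positive characteristic needs $[p]^*=0$ on $H^1(\mathcal{O})$ plus inseparable descent). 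The paper also establishes stability of $\mathcal{E}|_F$ not via semihomogeneity but by combining the coprimality of rank and degree with the ampleness of $\mathcal{E}^\vee$.

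Part (ii) is fatally flawed. A family obtained by pulling back the elliptic-base family of (i) along a cover $B'\to B$ \emph{by construction} comes by base change from a family over a curve of genus $1$, so it cannot satisfy the non-descent requirement, and no amount of Frobenius-semistability bookkeeping can produce a contradiction with a descent that manifestly exists. You also have the role of the congruence reversed: in the paper, $p\equiv\pm 1\,[8]$ is the condition under which the relevant rank-$4$ bundle \emph{is} strongly semistable --- namely the restriction of $\mathcal{E}$ along the genus-$2$ curve $i:C\to\Pic^5(C)$, $P\mapsto\omega_C^{\otimes 3}(-P)$, for $C:Z^2=X^6+Y^6$, identified with a twisted syzygy bundle of $\omega_C^{\otimes 3}$ and shown to be Frobenius-periodic up to twist --- and this is what allows a Lange--Stuhler trivialization over a finite cover $B'$ of $C$, hence a base of genus $\geq 2$. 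When $p\equiv\pm 3\,[8]$ the bundle is unstable and the construction breaks down. Non-descent is then automatic for a different and elementary reason: the classifying map $B'\to\Pic^5(C)$ has image $i(C)$, a curve of genus $2$, which cannot be dominated by a curve of genus $\leq 1$. So part (ii) requires a genuinely new base inside $\Pic^5(C)$ and a substantial strong-semistability computation, not a cover of the elliptic example.
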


The curves parametrized by our families have genus $2$ and degree $5$.
As the modu\-li space of smooth curves of genus $2$ is affine \cite{Igusa},  such families are necessarily isotrivial. It is the degree $5$ line bundle providing the embedding that
varies in the family. In view of Chang and Ran's result \cite[Theorem 3]{ChangRan2}, those examples are minimal: they have both
smallest genus and smallest degree possible.

Theorem \ref{main} (i) is also optimal in the sense that the genus of the base is minimal:

\begin{thm}
\label{main2}
There are no non-trivial complete families of non-degenerate smooth space curves over $\mathbb{P}^1$.
\end{thm}

Theorem \ref{main} shows the existence of elliptic curves (and, when $\chara(\mathbbm{k})\equiv \pm 1[8]$,
of a curve of genus $\geq 2$) in the Hilbert scheme
of non-degenerate smooth space curves: it fits into the classical theme of constructing complete subvarieties of moduli spaces
initiated by Oort \cite{Oortcomplete}.

We do not know how to remove the hypothesis on $\mathbbm{k}$ in Theorem \ref{main} (ii) (see Remark \ref{remimprove}).  We also leave open the question whether there exist non-isotrivial complete families of smooth space curves. Since there do not exist
non-isotrivial complete families of smooth curves over curves of genus $\leq 1$ \cite[Th\'eor\`eme 4]{Szpiro}, Theorem \ref{main} (ii) may be viewed as a first step towards constructing 
non-isotrivial complete families of smooth space curves. 

\vspace{1em}

  In section \ref{section1}, we study when an abstract family of smooth polarized curves over a smooth projective curve gives rise to a non-trivial family of non-degenerate smooth space curves. We obtain necessary conditions in Proposition \ref{mainth2bis} and sufficient conditions in Proposition \ref{emb} that yield proofs of Theorem \ref{main2} and Theorem \ref{main}, respectively.
 A key role is played by the strong semistability of some vector bundles on the base, and \S\ref{sss} is devoted to recalling generalities on strong semistability. 

  The proof of Theorem \ref{main} (ii) in section \ref{section1} requires to verify the strong semistability of some vector bundles. We postpone this important step to section~\ref{secredmodp}. Our strategy there is to ensure that the relevant bundles are syzygy bundles (see Definition \ref{defsyz}). The strong semistability of such bundles has been related by Brenner \cite{Brenner} and Trivedi \cite{Trivedi} to Hilbert-Kunz multiplicities (see Definition \ref{defHK} and Theorem \ref{BrennerTrivedi}).
 In our situation, we do not know how to compute the relevant Hilbert-Kunz multiplicities directly, as Han and Monsky did for Fermat curves \cite{Han, HanMonsky, Monsky}. Instead, we take inspiration from \cite{BrennerKaid}, where Brenner and Kaid obtain  stronger results than strong semistability (explicit Frobenius periodicity up to a twist) for some syzygy bundles over Fermat curves.
The strategy of \cite{BrennerKaid} uses crucially the semistabi\-lity of the syzygy bundles, that is known thanks to Han and Monsky. We need to replace these arguments by different ones: explicit syzygy computations using the strong Lefschetz property of 
appropriate homogeneous ideals (see \S\ref{parsyz}). A benefit of our method is that it allows us to give new examples of how 
Hilbert-Kunz multiplicities vary with the characteristic of the base field in Theorem \ref{thHK}.

\bigskip

{\it Acknowledgements.} I would like to thank Chungsim Han for having kindly made available to me a copy of her thesis \cite{Han}.

\section{Embedding abstract families}
\label{section1}

\subsection{Strong semistability} \label{sss}
If $\mathbbm{k}$ is of positive characteristic, and $X$ is a variety over $\mathbbm{k}$,
we denote by $F:X\to X$ the absolute Frobenius morphism.

\begin{Def}
A vector bundle $\mathcal{E}$ on a smooth curve $B$ is \textit{strongly semistable} if $\mathbbm{k}$ is of characteristic $0$ and 
$\mathcal{E}$ is semistable, or if $\mathbbm{k}$ is of positive characteristic and
for every $k\geq 0$, $F^{k*}\mathcal{E}$ is semistable. 
\end{Def}
Unlike semistability, strong semistability is always preserved by finite base-change, tensor products and symmetric powers \cite[2.2.2, 2.2.3]{Langersurvey}.
The following important theorem is due to Langer \cite[Theorem 2.7]{Langer}:

\begin{thm}\label{HNfort}
Let $\mathcal{E}$ be a vector bundle on a smooth curve $B$. Then there exists a finite morphism from a smooth curve $f:B'\to B$
such that the graded pieces of the Harder-Narasimhan filtration of $f^*\mathcal{E}$ are strongly semistable.
 \end{thm}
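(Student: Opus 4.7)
The plan is to take $f$ to be an iterate of the absolute Frobenius. In characteristic zero, strong semistability coincides with semistability, so the Harder-Narasimhan filtration of $\mathcal{E}$ itself already has strongly semistable graded pieces and one may take $f=\Id$. Assume henceforth that $\chara(\mathbbm{k})=p>0$. Since $F:B\to B$ is a finite morphism from the smooth curve $B$, it suffices to exhibit some $k\geq 0$ for which the graded pieces of the Harder-Narasimhan filtration of $F^{k*}\mathcal{E}$ are all strongly semistable; then $f:=F^k$ works.

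Set $a_k:=\mu_{\max}(F^{k*}\mathcal{E})/p^k$ and let $\mathcal{V}_k\subset F^{k*}\mathcal{E}$ denote the maximal destabilizing subsheaf. Pulling back by $F$ produces $F^*\mathcal{V}_k\subset F^{(k+1)*}\mathcal{E}$ of slope $p\mu(\mathcal{V}_k)$, so $(a_k)$ is non-decreasing. A Shepherd-Barron--Sun type inequality
\[
\mu_{\max}(F^*\mathcal{V})\leq p\,\mu_{\max}(\mathcal{V})+(\mathrm{rk}\,\mathcal{V}-1)(2g(B)-2),
\]
proved via the canonical connection on a Frobenius pullback, iterates to bound $a_k\leq \mu_{\max}(\mathcal{E})+(\mathrm{rk}\,\mathcal{E}-1)(2g-2)/(p-1)$. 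Since the ranks of the $\mathcal{V}_k$ take only finitely many values in $\{1,\dots,\mathrm{rk}\,\mathcal{E}\}$, pigeonhole together with a denominator argument (slopes of subsheaves of bounded rank have controlled denominators) forces $(a_k)$ to be eventually constant, say for $k\geq k_0$.

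Once $a_{k+1}=a_k$, the saturation of $F^*\mathcal{V}_k$ in $F^{(k+1)*}\mathcal{E}$ must coincide with $\mathcal{V}_{k+1}$; in particular $F^{l*}\mathcal{V}_{k_0}$ is semistable (being a maximal destabilizer) for every $l\geq 0$, so $\mathcal{V}_{k_0}$ is strongly semistable. Replacing $\mathcal{E}$ by the quotient $F^{k_0*}\mathcal{E}/\mathcal{V}_{k_0}$ and inducting on the rank, by summing the Frobenius exponents produced along the way one extracts a single integer $k$ such that every graded piece of the Harder-Narasimhan filtration of $F^{k*}\mathcal{E}$ is strongly semistable, which is the desired conclusion.

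The main obstacle is the boundedness and eventual constancy of $(a_k)$. The upper bound is nontrivial and is the heart of the matter: it quantifies the extent to which Frobenius pullback can destabilize a semistable bundle and requires the connection-based estimate above. By contrast, the induction on rank and the extraction of the strongly semistable piece from the constant-slope regime are comparatively formal once this estimate is in hand.
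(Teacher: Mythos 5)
The paper does not prove this statement at all: it is quoted verbatim from Langer (\emph{Semistable sheaves in positive characteristic}, Theorem 2.7) and used as a black box, so the only comparison available is with Langer's argument, whose general shape (iterating the absolute Frobenius, the canonical-connection estimate, induction on the rank) your outline does follow. The characteristic-zero reduction, the monotonicity of $a_k=\mu_{\max}(F^{k*}\mathcal{E})/p^k$, the boundedness via the estimate $\mu_{\max}(F^{*}\mathcal{V})\leq p\,\mu_{\max}(\mathcal{V})+(\mathrm{rk}\,\mathcal{V}-1)(2g-2)$, and the deduction that once $a_k$ stabilizes the maximal destabilizing subsheaf is strongly semistable (since $\mu_{\max}(F^{*}\mathcal{V}_{k})\leq\mu_{\max}(F^{(k+1)*}\mathcal{E})=p\,\mu(\mathcal{V}_{k})$ forces $F^{*}\mathcal{V}_{k}$ to be semistable) are all fine.

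The genuine gap is the step you describe as ``pigeonhole together with a denominator argument forces $(a_k)$ to be eventually constant.'' The denominators of $a_k$ are \emph{not} controlled: $\mu_{\max}(F^{k*}\mathcal{E})$ lies in $\frac{1}{\mathrm{lcm}(1,\dots,r)}\mathbb{Z}$, so $a_k$ has denominator of size roughly $p^k$, and a nonzero increment $a_{k+1}-a_k$ is only bounded below by a constant times $p^{-(k+1)}$. A non-decreasing bounded sequence whose nonzero increments are of this size need not be eventually constant (compare $a_k=1-p^{-k}$, whose increments are exactly of the allowed form), so monotonicity, boundedness and rationality alone prove only that $a_k$ \emph{converges}, not that the limit $L_{\max}(\mathcal{E})$ is attained. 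Attainment is the actual heart of Langer's theorem and requires further geometric input: roughly, whenever $a_{k+1}>a_k$ the new maximal destabilizing subsheaf $\mathcal{V}_{k+1}$ cannot be invariant under the canonical connection (otherwise it would descend to a subsheaf of $F^{k*}\mathcal{E}$ of too large a slope), and the connection then forces the first gap of the Harder--Narasimhan filtration of $F^{(k+1)*}\mathcal{E}$ to be at most $2g-2$; this produces subsheaves of strictly larger rank whose normalized slopes also converge to $L_{\max}(\mathcal{E})$, and an induction on that rank (which is bounded by $r$) yields the contradiction. So you have correctly identified all the ingredients except the one that carries the real weight, and your closing remark that everything after the upper bound is ``comparatively formal'' mislocates the difficulty. (Two smaller points: your claim that the saturation of $F^{*}\mathcal{V}_{k}$ \emph{equals} $\mathcal{V}_{k+1}$ once $a_{k+1}=a_k$ is not quite right---it is only contained in it, possibly with smaller rank---though the conclusion you draw from it is correct; and the induction on the rank via the quotient $F^{k_0*}\mathcal{E}/\mathcal{V}_{k_0}$ needs the verification that $L_{\max}$ of the quotient does not exceed $\mu(\mathcal{V}_{k_0})/p^{k_0}$, which does hold but should be said.)
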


Such a filtration will be called a \textit{strong Harder-Narasimhan filtration}.
In charac\-teristic $0$, the Harder-Narasimhan filtration is always strong.
Over elliptic curves, the situation is very simple:

\begin{prop}\label{ell} Let $\mathcal{E}$ be an indecomposable vector bundle over an elliptic curve.
\begin{enumerate}[(i)]
\item $\mathcal{E}$ is strongly semistable,
\item $\mathcal{E}$ is stable if and only if its degree is prime to its rank.
\end{enumerate}
\end{prop}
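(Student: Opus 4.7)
My plan combines Atiyah's classification of indecomposable vector bundles on an elliptic curve $E$ (valid in arbitrary characteristic thanks to Oda's extension) with a standard slope inequality for Frobenius pullbacks.

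For assertion (i), Atiyah's theorem directly yields that any indecomposable bundle $\mathcal{E}$ on $E$ is semistable, which settles the characteristic zero case. In characteristic $p>0$ it suffices, by iteration, to show that Frobenius pullback preserves semistability on $E$. For this I would invoke the Shepherd-Barron--Langer inequality
\[
\mu_{\max}(F^{*}\mathcal{G}) - \mu_{\min}(F^{*}\mathcal{G}) \;\leq\; (\mathrm{rk}\,\mathcal{G}-1)(2g-2),
\]
valid for any semistable bundle $\mathcal{G}$ on a smooth projective curve of genus $g$ in positive characteristic. For $g=1$ the right-hand side vanishes, so $F^{*}\mathcal{G}$ is again semistable; applying this successively to $\mathcal{E}, F^{*}\mathcal{E}, F^{2*}\mathcal{E},\dots$ yields strong semistability.

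For assertion (ii), set $r=\mathrm{rk}(\mathcal{E})$ and $d=\deg(\mathcal{E})$. By (i), $\mathcal{E}$ is semistable, so it fails to be stable exactly when it carries a proper nonzero subbundle $\mathcal{G}$ of the same slope $d/r$. If $\gcd(r,d)=1$, such a $\mathcal{G}$ would satisfy $r\deg(\mathcal{G})=d\,\mathrm{rk}(\mathcal{G})$, hence $r\mid\mathrm{rk}(\mathcal{G})$, contradicting $0<\mathrm{rk}(\mathcal{G})<r$; thus $\mathcal{E}$ is stable. Conversely, if $h:=\gcd(r,d)>1$, Atiyah's classification exhibits $\mathcal{E}$ as an iterated self-extension of a single stable bundle of rank $r/h$ and degree $d/h$, and the first step of any such filtration is a proper subbundle of slope $d/r$, so $\mathcal{E}$ is not stable.

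The main obstacle is lining up the positive-characteristic inputs. The Jordan--H\"older refinement used in (ii) is due to Oda, and the slope bound used in (i) is a standard consequence of work of Shepherd-Barron and Langer. A more explicit alternative to (i), avoiding the slope inequality, would be to trace $F^{*}$ through Atiyah's explicit description of indecomposable bundles (the Atiyah bundles $F_r$ and their extension structure, together with pushforwards along isogenies), using flat base change in the Frobenius square to verify directly that the class of semistable bundles on $E$ is preserved by $F^{*}$.
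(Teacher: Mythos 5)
Your proof is correct, but both non-elementary steps are routed differently from the paper. For (i), the paper quotes Ploog for the semistability of indecomposable bundles and then invokes Mehta--Ramanathan (Theorem 2.1 of the cited paper) to pass to strong semistability; you instead make the mechanism explicit via the bound $\mu_{\max}(F^{*}\mathcal{G})-\mu_{\min}(F^{*}\mathcal{G})\leq(\mathrm{rk}\,\mathcal{G}-1)(2g-2)$ for semistable $\mathcal{G}$, which vanishes in genus one. That is essentially the proof of the Mehta--Ramanathan statement specialized to $g=1$ (the canonical connection on $F^{*}\mathcal{G}$ either preserves the maximal destabilizing subsheaf, which then Cartier-descends and destabilizes $\mathcal{G}$, or induces a nonzero map from it to the quotient twisted by $\Omega^1_E\simeq\mathcal{O}_E$, which is excluded by slopes); so your version is more self-contained, at the price of an extra input whose attribution (Shepherd-Barron, Sun, Langer) you should pin down. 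For (ii), the coprime-implies-stable direction is the same elementary divisibility argument in both texts. For the converse, the paper uses only the weak fact that $\gcd(r,d)=h>1$ forces $\mathcal{E}$ to be non-simple (Oda, Corollary 2.5), hence non-stable, whereas you invoke the finer structural statement that $\mathcal{E}$ is an iterated self-extension of a stable bundle of rank $r/h$ and degree $d/h$. That finer statement is true and citable (Atiyah in characteristic $0$, Oda in characteristic $p$), but it is precisely the part of Atiyah's analysis that is delicate in positive characteristic (the tensor-product decompositions $E'\otimes F_h$ do not all carry over verbatim), which is presumably why the paper settles for the non-simplicity corollary; if you keep your route, make sure the rank-$r/h$ form of the classification, and not merely the bijection $\mathcal{E}(r,d)\simeq\mathcal{E}(h,0)$, is what you extract from Oda.
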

\begin{proof}
In the first statement, the semistability of $\mathcal{E}$ is proved in \cite[Lemma 1]{Ploog}.
The strong semistability then follows from the more general \cite[Theorem 2.1]{MehtaRama}. 

A semistable vector bundle whose rank and degree are prime to each other is clearly stable. Conversely, when the degree and the rank
of $\mathcal{E}$ are not prime to each other, Oda has proved \cite[Corollary 2.5]{Oda} that $\mathcal{E}$ is not simple, hence not stable.
\end{proof}

We will need conditions ensuring that a vector bundle becomes isomorphic
to a direct sum of isomorphic line bundles after an appropriate base-change. This is the goal of the two following propositions.
The first one might be well-known, but I do not know a reference for it. The second one is the Lange-Stuhler theorem.

\begin{prop}\label{sssell}
Let $\mathcal{E}$ be a stable vector bundle over an elliptic curve $E$.
Then there exists an
isogeny $f:E'\to E$ such that $f^*\mathcal{E}$ is isomorphic
to a direct sum of isomorphic line bundles.
\end{prop}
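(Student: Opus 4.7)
My plan is to realize $\mathcal{E}$ as a pushforward of a line bundle under a separable isogeny, and then trivialize the translation-covariant decomposition of its pullback. Write $r=\mathrm{rk}(\mathcal{E})$ and $d=\deg(\mathcal{E})$; since $\mathcal{E}$ is stable and hence indecomposable, Proposition \ref{ell}(ii) gives $\gcd(r,d)=1$. The first step is to invoke the Atiyah--Oda classification to produce an elliptic curve $E_1$, a separable isogeny $\pi:E_1\to E$ of degree $r$, and a line bundle $M$ on $E_1$ such that $\mathcal{E}\simeq \pi_* M$. In positive characteristic some care is required when $\chara(\mathbbm{k})\mid r$ to ensure that $\pi$ can be chosen separable, possibly after changing $E_1$.

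Because $\pi$ is then a $\ker(\pi)$-torsor, flat base change along $\pi$ gives the standard identification
\[
\pi^* \mathcal{E}=\pi^*\pi_* M \;\simeq\; \bigoplus_{t\in \ker(\pi)} t^* M.
\]
Set $N_t:=t^*M\otimes M^{-1}\in\Pic^0(E_1)$. Using that degree zero line bundles on an elliptic curve are translation-invariant, one checks that $t\mapsto N_t$ is a group homomorphism from $\ker(\pi)$ to $\Pic^0(E_1)$; let $n$ be the exponent of its finite image. Pulling back further by $[n]:E_1\to E_1$ and using that degree zero line bundles on an elliptic curve are antisymmetric, so that $[n]^* L\simeq L^{\otimes n}$ for $L\in\Pic^0(E_1)$, one obtains $[n]^* N_t\simeq \mathcal{O}_{E_1}$ for every $t\in\ker(\pi)$. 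Setting $f:=\pi\circ [n]:E_1\to E$, one concludes
\[
f^*\mathcal{E} \;\simeq\; \bigoplus_{t\in\ker(\pi)} \bigl([n]^*M\otimes [n]^*N_t\bigr) \;\simeq\; ([n]^*M)^{\oplus r},
\]
a direct sum of isomorphic line bundles, as desired.

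The main obstacle I anticipate is securing the initial presentation $\mathcal{E}\simeq \pi_* M$ with $\pi$ \emph{separable} in positive characteristic when $\chara(\mathbbm{k})\mid r$: the naive Atiyah construction may produce an inseparable isogeny, for which $\pi^*\pi_* M$ does not decompose as a direct sum indexed by $\ker(\pi)$. Handling this cleanly would probably require either a refined Atiyah-type classification of simple bundles on elliptic curves valid in all characteristics, or a reduction in which the Frobenius twists appearing on $E$ are absorbed into a modification of $\mathcal{E}$ before the separable argument above is applied.
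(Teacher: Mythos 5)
Your argument for the first step is genuinely different from the paper's and, where it applies, it is correct and rather clean. In characteristic $0$ (and in characteristic $p\nmid r$) the Atiyah description $\mathcal{E}\simeq\pi_*M$ with $\pi$ separable of degree $r$ is available, the identification $\pi^*\pi_*M\simeq\bigoplus_{t\in\ker\pi}t^*M$ is standard, and your use of the theorem of the square ($t\mapsto t^*M\otimes M^{-1}$ is a homomorphism into $\Pic^0$, killed by $[n]^*$) is fine. Note that your route even buys something: the summands of $f^*\mathcal{E}$ come out isomorphic to $[n]^*M$ all at once, whereas the paper first splits $f^*\mathcal{E}$ into line bundles and then needs a separate argument (Galois descent for the separable part of $f$ combined with Cartier's inseparable descent) to show the summands differ only by torsion line bundles.

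The obstacle you flag at the end is, however, a genuine gap and not a technicality. When $\chara(\mathbbm{k})=p$ divides $r$, the required presentation $\mathcal{E}\simeq\pi_*M$ with $\pi$ \emph{separable} of degree $r$ can fail to exist for structural reasons: if $E$ is supersingular, every elliptic curve isogenous to $E$ is supersingular, its $p$-torsion is a local group scheme, and hence no isogeny onto $E$ of degree divisible by $p$ has \'etale kernel — so there is simply no separable isogeny of degree $r$ to feed into your construction. Since stable bundles with $p\mid r$ and $\gcd(r,d)=1$ do exist on supersingular curves (e.g. $r=p$, $d=1$), and since the paper needs the statement in all characteristics (rank $4$ in characteristic $2$ already falls into this case), the proof is incomplete as written, and neither ``changing $E_1$'' nor a refined classification is spelled out enough to close it. For comparison, the paper avoids the pushforward presentation entirely in positive characteristic: after an isogeny of degree divisible by $r$ the pullback is no longer stable (Proposition \ref{ell}(ii)), one takes a Jordan--H\"older filtration, reduces by induction to rank-one graded pieces, and then kills all the extension classes using the vanishing of $[p]^*$ on $H^1(E',\mathcal{O})$. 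Incorporating that mechanism (or some substitute for it) in the case $p\mid r$ is what your proposal is missing.
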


\begin{proof}
By Proposition \ref{ell} (i), the pull-back of $\mathcal{E}$ by any isogeny is semistable. 

We first claim that there exists an isogeny $f:E'\to E$ such that $f^*\mathcal{E}$ is isomorphic
to a direct sum of line bundles. To prove it, let $f:E'\to E$ be an isogeny whose degree is divisible by the rank of $\mathcal{E}$.
Write $f^*\mathcal{E}$ as a direct sum
of indecomposable bundles. If $\mathbbm{k}$ is of characteristic $0$, those indecomposable bundles are all stable of the same slope
by \cite[Lemma 3.2.3]{Huybrechts}, and Proposition \ref{ell} (ii) implies that they are line bundles.
If $\mathbbm{k}$ is of positive characteristic $p$, Proposition \ref{ell} (ii)
shows that $f^*\mathcal{E}$ cannot be stable. Considering a Jordan-H\"older filtration for $f^*\mathcal{E}$ and using induction on the rank of $\mathcal{E}$, it is possible to suppose that all the graded pieces of this filtration have rank $1$.
Now, extensions between line bundles of
the same degree are trivial if the line bundles are not isomorphic, and
parametrized by $H^1(E',\mathcal{O}_{E'})$ otherwise.
Let $[p]:E'\to E'$ denote the multiplication by $p$ isogeny. Since the dual of $[p]$ (that is $[p]$ itself) is  not separable, the pull-back map $[p]^*:H^1(E',\mathcal{O}_{E'})\to H^1(E',\mathcal{O}_{E'})$ vanishes.
Base-changing by an appropriate power of $[p]$ thus splits all extensions appearing in the Jordan-H\" older filtration, and proves our claim.

It remains to prove that, up to another base-change by an isogeny, all these line bundles are isomorphic.
 Let us write $f^*\mathcal{E}\simeq\bigoplus_i\mathcal{F}_i$, where the $\mathcal{F}_i$ are the isotypical factors:
each $\mathcal{F}_i$ is the direct sum of isomorphic line bundles.
Write $f=g\circ h$, where $h:E'\to F$ is separable of Galois group $G$ and $g:F\to E$ is purely inseparable. If the group $G$ did not
act transitively on the isotypical factors, a non-trivial direct sum $\mathcal{G}$ of some of them would descend to $F$ by Galois descent.
Since, $\Hom(\mathcal{G}, f^*\mathcal{E}/\mathcal{G}\otimes \Omega^1_{E'})=\Hom(\mathcal{G}, f^*\mathcal{E}/\mathcal{G})=0$,
inseparable descent \cite[Theorem 5.1]{descinsep} shows that this sheaf descends even to $E$,
contradicting the stability of $\mathcal{E}$. Hence $G$ permutes transitively the isotypical components. But since
$G$ acts on $E'$ as a finite subgroup of translations, it follows that the line bundles appearing in $\mathcal{E}$ differ from each other by
torsion line bundles. Hence all the line bundles appearing become isomorphic after further pull-back by a well-chosen isogeny.
\end{proof}

\begin{prop}\label{sssfini}
Let $\mathcal{E}$ be a vector bundle on a smooth curve $B$ over the algebraic closure of a finite field. Then the following
conditions are equivalent:
\begin{enumerate}[(i)]
\item $\mathcal{E}$ is strongly semistable.
\item There exists a finite morphism from a smooth curve $f:B'\to B$ such that $f^*\mathcal{E}$ is isomorphic to a direct sum of isomorphic line bundles.
\end{enumerate}
\end{prop}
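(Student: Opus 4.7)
The easy direction is (ii) $\Rightarrow$ (i). If $f^*\mathcal{E}\cong\mathcal{L}^{\oplus r}$ for some line bundle $\mathcal{L}$ on $B'$, then for each $k\geq 0$ the bundle $F^{k*}f^*\mathcal{E}=f^*F^{k*}\mathcal{E}$ is again a direct sum of isomorphic line bundles, hence semistable; a destabilizing subsheaf of $F^{k*}\mathcal{E}$ would pull back to one of $f^*F^{k*}\mathcal{E}$, so each $F^{k*}\mathcal{E}$ is itself semistable, proving (i).

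For the converse (i) $\Rightarrow$ (ii), my first move is to reduce to the case $\deg\mathcal{E}=0$. Let $r$ be the rank of $\mathcal{E}$. Choose a finite morphism $h:B_1\to B$ of degree divisible by $r$, and a line bundle $\mathcal{M}$ on $B_1$ with $r\deg\mathcal{M}=-\deg h^*\mathcal{E}$; then $\mathcal{E}':=h^*\mathcal{E}\otimes\mathcal{M}$ is strongly semistable of degree $0$, and any trivialization $g^*\mathcal{E}'\cong\mathcal{O}_{B'}^{\oplus r}$ along a further cover $g:B'\to B_1$ yields $(h\circ g)^*\mathcal{E}\cong(g^*\mathcal{M}^\vee)^{\oplus r}$, as required by (ii).

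It therefore remains to show that a strongly semistable bundle $\mathcal{E}'$ of rank $r$ and degree $0$ becomes trivial on some finite pullback. Since $\mathbbm{k}=\overline{\mathbb{F}_p}$, both $B_1$ and $\mathcal{E}'$ are defined over some finite subfield $\mathbb{F}_q$, and the coarse moduli space $M$ of S-equivalence classes of semistable rank-$r$ degree-$0$ bundles on $B_1$ inherits a natural $\mathbb{F}_q$-structure. Because the absolute Frobenius iterates are compatible with this $\mathbb{F}_q$-structure (up to the Galois action on $\overline{\mathbb{F}_p}$), the points $[F^{k*}\mathcal{E}']$ lie in a finite subset of $M(\overline{\mathbb{F}_p})$; pigeonhole then produces $k<\ell$ with $F^{k*}\mathcal{E}'$ and $F^{\ell*}\mathcal{E}'$ S-equivalent. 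A classical theorem of Lange--Stuhler, which interprets Frobenius-periodic bundles as representations of the \'etale fundamental group, then shows that a semistable degree-$0$ bundle with Frobenius-periodic S-class on a smooth curve over $\overline{\mathbb{F}_p}$ is trivialized by a finite \'etale cover.

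The main obstacle, I expect, is this final Lange--Stuhler step: upgrading the S-equivalence of Frobenius iterates to an actual \'etale trivialization of the semistable (not merely stable) bundle requires controlling both the individual Jordan--H\"older constituents and the extensions between them after pullback. By contrast, the reduction to degree $0$ and the finite-field pigeonhole argument are comparatively formal, resting respectively on standard properties of finite covers of smooth curves and on the finiteness of $M(\mathbb{F}_q)$.
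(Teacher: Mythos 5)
Your overall route coincides with the paper's: the direction (ii)$\Rightarrow$(i) is formal, and for (i)$\Rightarrow$(ii) one first passes to a finite cover and twists by a line bundle to reduce to a strongly semistable bundle of degree $0$, then invokes Lange--Stuhler. The reduction to degree $0$ is carried out correctly (and in slightly more detail than in the paper, which simply asserts it is easy). The difference is that you then try to sketch the Lange--Stuhler step yourself, and this sketch contains a genuine gap, which you partly sense but do not resolve.

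The problem is the use of the coarse moduli space $M$ of S-equivalence classes. Pigeonhole on $M(\mathbb{F}_q)$ only yields $k<\ell$ with $F^{k*}\mathcal{E}'$ and $F^{\ell*}\mathcal{E}'$ S-equivalent, and there is no ``classical theorem'' taking Frobenius-periodicity of the S-equivalence class as input: S-equivalence forgets both the Jordan--H\"older constituents' individual identity beyond their class and, more seriously, all extension data, so a periodic S-class does not by itself produce the isomorphism $F^{s*}\mathcal{E}'\simeq F^{t*}\mathcal{E}'$ that the descent/fundamental-group mechanism of Lange--Stuhler (Satz 1.4) actually requires. (Indeed, over $\bar{\mathbb{F}}_p$ every semistable degree-$0$ bundle has Frobenius-periodic S-class for trivial reasons, yet the conclusion is the content of their Satz 1.9, not a formal consequence of periodicity of S-classes.) The correct finiteness statement is at the level of isomorphism classes: semistable bundles of rank $r$ and degree $0$ form a bounded family, hence are quotients of a fixed sheaf parametrized by a Quot scheme of finite type over $\mathbb{F}_q$; its finitely many $\mathbb{F}_q$-points give finitely many isomorphism classes of such bundles defined over $\mathbb{F}_q$, and pigeonhole among the $F^{k*}\mathcal{E}'$ (all defined over $\mathbb{F}_q$) yields an actual isomorphism $F^{s*}\mathcal{E}'\simeq F^{t*}\mathcal{E}'$ with $s<t$. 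One further small overstatement: the trivializing cover produced is in general only a finite surjective morphism (an \'etale cover composed with powers of Frobenius), not an \'etale cover; this is harmless for the proposition as stated, which only asks for a finite morphism, but your formulation claims more than is available. The paper sidesteps all of this by citing Lange--Stuhler, Satz 1.9, directly.
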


\begin{proof}
If (ii) holds, the vector bundle $f^*F^{k*}\mathcal{E}=F^{k*}f^*\mathcal{E}$ is semistable as a direct sum of isomorphic line bundles. This implies that $F^{k*}\mathcal{E}$ is semistable, proving (i).

Let us explain the other implication, due to Lange and Stuhler \cite{LangeStuhler}.
First, it is easy to find a finite morphism from a smooth curve $g:B''\to B$ and a line bundle $\mathcal{N}$ on $B''$ such that $g^*\mathcal{E}\otimes \mathcal{N}$
has degree $0$. By our hypothesis on the base field, the strongly semistable vector bundle $g^*\mathcal{E}\otimes \mathcal{N}$ is trivialized by a finite surjective morphism
$h:B'\to B''$ by \cite[Satz 1.9]{LangeStuhler}. Setting $f=g\circ h$, one sees that $f^*\mathcal{E}$ is a direct sum of line bundles isomorphic to $h^*\mathcal{N}^{-1}$.
\end{proof}

\subsection{The Harder-Narasimhan filtration} We start with a lemma:

\begin{lem}\label{lemE}
Let $B$ be a smooth curve, let $\pi:\mathcal{C}\to B$ be a smooth projective family of curves over $B$ and let $\mathcal{L}$ be a
line bundle on $\mathcal{C}$. 
Then $\mathcal{E}:=\pi_*\mathcal{L}$ is locally free and its formation commutes with base-change by any finite map from a smooth curve $B'\to B$.
Moreover, for every $b\in B$, the natural map $\mathcal{E}|_b\to H^0(\mathcal{C}_b,\mathcal{L}_b)$ is injective.
\end{lem}

\begin{proof}
The sheaf $\mathcal{E}$ is locally free as a torsion-free coherent sheaf over a smooth curve.
The second statement is a consequence of flat base-change \cite[III Proposition 9.3]{Hartshorne}.
As for the third statement, consider the exact sequence $0\to \mathcal{O}_B(-b)\to\mathcal{O}_B\to\mathcal{O}_B|_b\to 0$.
Pull it back to $\mathcal{C}$, tensor with $\mathcal{L}$ and push it forward to $B$ to get an exact sequence
$0\to\mathcal{E}(-b)\to\mathcal{E}\to\Ima(\mathcal{E}\to H^0(\mathcal{C}_b,\mathcal{L}_b))\to 0$.
Restricting it to $b$ using right-exactness of tensor product, and noticing that the morphism $\mathcal{E}(-b)|_b\to\mathcal{E}|_b$ vanishes,
one sees that $\mathcal{E}|_b\to H^0(\mathcal{C}_b,\mathcal{L}_b)$ is indeed injective.
\end{proof}

In the following proposition, we make use of the \textit{secant variety} $S\subset\mathbb{P}^4$ of a smooth curve $C\subset\mathbb{P}^4$, which is the union of all lines in $\mathbb{P}^4$ that meet $C$ with multiplicity $\geq 2$.

\begin{prop}\label{mainth2bis}
Let $\pi:\mathcal{C}\to B$, $\phi:\mathcal{C}\hookrightarrow\mathbb{P}^3_B$ be a non-trivial complete family of non-degenerate smooth space curves over
a smooth curve $B$, and $\mathcal{E}:=\pi_*\phi^*\mathcal{O}_{\mathbb{P}^3}(1)$. 
Then the constant subbundle
$\mathcal{O}_B^{\oplus 4}\subset\mathcal{E}$ with fibers $H^0(\mathbb{P}^3,\mathcal{O}_{\mathbb{P}^3}(1))$
is the first step of the Harder-Narasimhan filtration of $\mathcal{E}$.
\end{prop}

\begin{proof}
We argue by contradiction and suppose that the conclusion does not hold. The idea of the proof is to use the hypothesis that $\mathcal{O}_B^{\oplus 4}\subset\mathcal{E}$
is not the first step of the Harder-Narasimhan filtration of $\mathcal{E}$ to produce an embedding of $\mathcal{C}$ in a four-dimensional projective bundle over $B$, and to derive a contradiction by studying geometrically this embedding.
At any point of the proof, we may replace $B$ by a finite cover by a smooth curve $B'$ because
the formation of $\mathcal{E}$ commutes with this base-change by Lemma \ref{lemE}. 

Let $\mathcal{Q}$ be the quotient of $\mathcal{E}$ by $\mathcal{O}_B^{\oplus 4}$.
Using Theorem \ref{HNfort}, perform a base-change
to ensure that the strong Harder-Narasimhan filtration of $\mathcal{Q}$ is defined over $B$. Since $\mathcal{O}_B^{\oplus 4}$ is not the first step of the Harder-Narasimhan filtration of $\mathcal{E}$, the first step of the Harder-Narasimhan filtration of $\mathcal{Q}$ has non-negative degree. 

Choosing a field of definition of
finite type, spreading out and specializing to a general closed point, we get data defined over a finite field. It still contradicts the proposition, as $\mathcal{Q}$ still has a subbundle of non-negative degree after such a general specialization.
Consequently, we may suppose
that $\mathbbm{k}$ is the algebraic closure of a finite field. As above,
we may assume that the strong Harder-Narasimhan filtration of $\mathcal{Q}$ is defined over $B$ and that its
first step has non-negative degree.

Base-changing again using Proposition \ref{sssfini}, we may assume that this subbundle is a direct sum of line bundles of non-negative degree.
In particular, $\mathcal{Q}$ contains a subbundle $\mathcal{M}$ of rank $1$ of non-negative degree.
Consequently, there exists a subbundle $\mathcal{F}$ of $\mathcal{E}$ that is an extension of a line bundle of non-negative
degree $\mathcal{M}$ by $\mathcal{O}_B^{\oplus 4}$.
Base-changing using Theorem \ref{HNfort}, the strong Harder-Narasimhan filtration of $\mathcal{F}$ is defined over $B$. Let $\mathcal{G}\subset\mathcal{F}$ be the first step of this filtration.

Now, let us use $\mathcal{F}$ to embed $\mathcal{C}$ in a relative projective bundle over $B$: we get an immersion $\psi:\mathcal{C}\to\mathbb{P}_B\mathcal{F}$. Moreover, one recovers the original embedding $\phi$ by projecting away from $\mathbb{P}_B\mathcal{M}$. Note that, as $\mathcal{F}|_b\to H^0(\mathcal{C}_b,\mathcal{L}_b)$
is injective by Lemma \ref{lemE}, $\psi$ embeds all fibers of $\pi$ in a non-degenerate way in $\mathbb{P}^4$. Let us introduce the relative secant variety  $\mathcal{S}\hookrightarrow\mathbb{P}_B\mathcal{F}$ that is the union of the secant varieties of the embedded
curves $\mathcal{C}_b\hookrightarrow \mathbb{P}\mathcal{F}_b$. It is a
hypersurface of $\mathbb{P}_B\mathcal{F}$ because secant varieties of non-degenerate curves in $\mathbb{P}^4$ are of dimension $3$. It does not meet
$\mathbb{P}_B\mathcal{M}$ because, for every $b\in B$, the linear system $H^0(\mathbb{P}^3,\mathcal{O}_{\mathbb{P}^3}(1))$ induced an embedding of $\mathcal{C}_b$.

 Let $q:\mathbb{P}_B\mathcal{F}\to B$ be the projection and
$\mathcal{O}_q(1)$ be the relative tautological bundle.
 By description of the Picard group of a projective bundle, there exist $\mathcal{A}\in\Pic(B)$ and $l\in\mathbb{Z}$
such that $\mathcal{S}$ is the zero-locus of a section $\sigma\in H^0(\mathbb{P}_B\mathcal{F},\mathcal{O}_q(l)\otimes q^*\mathcal{A})=
H^0(B,\Sym^l\mathcal{F}\otimes \mathcal{A})$. That $\mathcal{S}$ does not meet $\mathbb{P}_B\mathcal{M}$
means exactly that $\sigma$ induces a nowhere vanishing section of $\mathcal{M}^{\otimes l}\otimes \mathcal{A}$ on $B$. In particular,
$\mathcal{A}\simeq \mathcal{M}^{ \otimes -l}$.

We distinguish three cases. Suppose first
that $\mu(\mathcal{G})<\mu(\mathcal{M})$, so that the graded pieces $\mathcal{G}_i$ of the strong Harder-Narasimhan filtration
 of $\mathcal{F}$ all have slope $<\mu(\mathcal{M})$.
 This filtration induces a filtration of $\Sym^l\mathcal{F}$ whose graded pieces are tensor products
of symmetric powers of the $\mathcal{G}_i$: these are strongly semistable of slope $<\mu(\mathcal{M}^{\otimes l})$.
Consequently, $H^0(B,\Sym^l\mathcal{F}\otimes \mathcal{M}^{\otimes -l})=0$, which is a contradiction.

Next, suppose that $\mu(\mathcal{G})\geq\mu(\mathcal{M})>0$.
The morphism $\mathcal{G}\to\mathcal{M}$ cannot be zero as there are no non-zero morphisms
$\mathcal{G}\to \mathcal{O}_B^{\oplus 4}$ by semistability of $\mathcal{G}$.
Again by semistability of $\mathcal{G}$, this morphism has to be surjective. Then $\mathcal{G}$ is an extension of $\mathcal{M}$
by a subbundle of $ \mathcal{O}_B^{\oplus 4}$, and the inequality $\mu(\mathcal{G})\geq\mu(\mathcal{M})$ implies that
$\mathcal{G}\to \mathcal{M}$ is an isomorphism. Hence $\mathcal{F}$ splits as a direct sum $\mathcal{O}_B^{\oplus 4}\oplus \mathcal{M}$.
The space  $H^0(B,\Sym^l\mathcal{F}\otimes \mathcal{M}^{\otimes -l})$ is one-dimensional because $\mu(\mathcal{M})>0$, and the zero locus of one of
its sections on a fiber of $q$ is a hyperplane with multiplicity $l$. This contradicts the fact that, the curve $\mathcal{C}_b$ being embedded
in a non-degenerate way in $\mathbb{P}^4$, its secant variety is also non-degenerate.

Finally, suppose that $\mu(\mathcal{M})=0$.
Then $\mathcal{F}$ is strongly semistable as an extension of strongly semistable bundles of the same degree.
Applying Proposition \ref{sssfini}, we may assume that $\mathcal{F}$ is a direct sum of isomorphic
line bundles, so that $\mathbb{P}_B\mathcal{F}\simeq \mathbb{P}^4_B$. The relative secant variety $\mathcal{S}$ is then a hypersurface of $\mathbb{P}^4_B$
avoiding a constant section. It follows that $\mathcal{S}$ is a product hypersurface, isomorphic to $S\times B$ where $S\subset\mathbb{P}^4$ is a hypersurface.
Consequently, $S$ is the secant variety of all curves $\mathcal{C}_b\hookrightarrow \mathbb{P}^4$.
Recall that Chang and Ran \cite[Theorem~3]{ChangRan2} proved that the curves
$\mathcal{C}_b$ have genus $\geq 2$. 
Hence, by Lemma \ref{secant} below, there are only finitely many possibilities for the curves $\mathcal{C}_b\subset\mathbb{P}^4$, and the subvariety $\psi:\mathcal{C}\hookrightarrow \mathbb{P}^4_B$ has to be a product itself.
Since the original family $\phi$ is obtained by projecting away from a constant section,
it follows that our original family was a product, contradicting its non-triviality.
\end{proof}

We have used the following lemma:

\begin{lem}\label{secant}
Let $C\subset \mathbb{P}^4$ be a smooth non-degenerate curve of genus at least $2$, and let $S$ be its secant variety. Then there is a unique family of lines
that covers $S$, namely the $2$-dimensional family of secants of $C$. Moreover, $C$ is an irreducible component of the set of points included in
infinitely many of these lines. 
\end{lem}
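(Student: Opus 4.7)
The plan is to handle the two assertions of the lemma separately, with a common setup based on the incidence variety $I := \{(p, \{a,b\}) \in \mathbb{P}^4 \times \Sym^2 C : p \in \langle a,b \rangle\}$. This $I$ is a $\mathbb{P}^1$-bundle over $\Sym^2 C$, hence irreducible of dimension $3$; since Terracini's lemma shows that $S$ has dimension $3$, the natural morphism $I \to S$ is surjective and generically finite.

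For the uniqueness assertion, I would argue by contradiction: suppose $\mathcal{F}$ is a $2$-dimensional family of lines covering $S$ and distinct from the family of secants, and aim to show that a general $\ell \in \mathcal{F}$ is itself a secant. To this end, I would examine $I|_\ell$, the preimage of $\ell$ under $I \to S$, a curve that projects finitely onto $\ell$ and has some image $D_\ell \subset \Sym^2 C$ of dimension at most $1$. In the easy case $\dim D_\ell = 0$, the line $\ell$ is covered by finitely many secants, one of which must coincide with $\ell$. In the case $\dim D_\ell = 1$, composing the resulting non-constant morphism $\mathbb{P}^1 \to \Sym^2 C$ with the Abel--Jacobi map $\Sym^2 C \to \Pic^2 C$ shows, since $\Pic^2 C$ lives inside an abelian variety, that the image is contained in a fiber; by Riemann--Roch this fiber is either a point or, when $C$ is hyperelliptic of genus $\geq 2$, the unique $\mathbb{P}^1 \subset \Sym^2 C$ given by the $g^1_2$-pencil.

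The main obstacle I anticipate is the remaining hyperelliptic sub-case. There the plan is to show that if $\ell$ meets every secant in the $g^1_2$-pencil, then $\ell$ lies in the ruled surface $\Sigma \subset \mathbb{P}^4$ swept out by these secants. Since $C$ is non-degenerate, $\Sigma$ is not a plane, so lines contained in $\Sigma$ form a family of dimension at most $1$; this contradicts the $2$-dimensionality of $\mathcal{F}$, forcing a general $\ell \in \mathcal{F}$ to be a secant after all. Combined with the previous step, $\mathcal{F}$ is contained in the $2$-dimensional family of secants, hence equal to it, a contradiction.

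For the second assertion, let $Z \subset \mathbb{P}^4$ denote the set of points lying on infinitely many secants. Since $C \subset Z$ is irreducible of dimension $1$, it suffices to show $\dim Z \leq 1$. Supposing otherwise, I would pick an irreducible component $W \subset Z$ of dimension at least $2$; its preimage $I_W$ in $I$ has fibers over $W$ of positive dimension by the definition of $Z$, so $\dim I_W \geq \dim W + 1 \geq 3$. Since $I$ is irreducible of dimension $3$, this forces $I_W = I$, and pushing forward by $I \to S$ gives $S \subseteq W$, contradicting $\dim W < 3 = \dim S$. This second part is a clean dimension count; only the hyperelliptic analysis in the first part requires real care.
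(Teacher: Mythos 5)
Your overall architecture (the incidence variety $I\to \Sym^2C$, the dichotomy on $\dim D_\ell$, the Abel--Jacobi argument, and the dimension count for the second assertion) is the same as the paper's, and your second part is correct as written. But there is a genuine gap at the decisive step of the uniqueness argument, namely the case $\dim D_\ell=1$. You write ``the resulting non-constant morphism $\mathbb{P}^1\to\Sym^2C$'', but no such morphism exists from what you have established: the curve $I|_\ell$ maps \emph{finitely onto} $\ell$, not isomorphically, so $I|_\ell$ need not be rational, and an irrational curve can map non-constantly to $\Pic^2C$. Concretely, if $I\to S$ has generic degree $\geq 2$, the preimage of a general $\ell\in\mathcal{F}$ could be an irreducible curve of positive genus dominating both $\ell$ and a non-rational curve $D_\ell\subset\Sym^2C$, and your Abel--Jacobi step then gives nothing. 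What you need is that some component of $I|_\ell$ maps \emph{birationally} to $\ell$, i.e.\ that the secant map $I\to S$ is birational (a general point of $S$ lies on a unique secant). This is exactly the input the paper supplies, and it is the technical heart of its proof: it is extracted from Dale's results on secant varieties, and the paper devotes most of the argument to explaining how to deduce birationality from that reference. This is not a formality here, because the lemma must hold in arbitrary characteristic, where the trisecant lemma and the separability of the secant map are delicate; for the same reason your appeal to Terracini's lemma for $\dim S=3$ is not available in positive characteristic and should be replaced by the characteristic-free statement the paper uses.

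Once birationality of $I\to S$ is granted, your argument closes up: the distinguished component $\tilde\ell\subset I|_\ell$ is rational, its image in $\Sym^2C$ is a point (whence $\ell$ is a secant) or a rational curve, which by Abel--Jacobi must be the $g^1_2$-pencil in the hyperelliptic case; your ruled-surface argument for that subcase is a correct (if slightly more roundabout) substitute for the paper's observation that $C^{(2)}$ is not \emph{covered} by rational curves, so that a general member of a covering family cannot map onto the single rational curve $C^{(2)}$ may contain. So the fix is to prove or cite the birationality of the secant map; without it the proof does not go through.
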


\begin{proof}
Let $P$ be the $\mathbb{P}^1$-bundle over the two-fold symmetric product $C^{(2)}$ of $C$ whose fiber over $(x,y)\in C^{(2)}$ is the line through $x$ and $y$ if $x\neq y$ (resp. the tangent at $x$ if $x=y$).
The natural surjective morphism $p:P\to S$ is birational by  \cite{Dale}.
As this claim is not explicitly stated by Dale, we explain how to deduce it from \cite{Dale}.

 To do so, we introduce a few notation. Let $Q$ be the pull-back of $P$ by the degree $2$ morphism $C^2\to C^{(2)}$: it is a $\mathbb{P}^1$-bundle over $C^2$.
Define $M(C)$ to be the set of triples $(x,z,l)$, where $x\in C$, $z\in \mathbb{P}^4$, and $l$ is a line containing $x$ and $z$ that is secant to $C$. The morphism $r:Q\to M(C)$ sending a point $z$ on the line $l$ over $(x,y)$ to $(x,z,l)$ is birational by \cite[Theorem 1.8]{Dale}. Define $SB(C)$ to be the set of pairs $(z,l)$ where $l$ is a line secant to $C$ and $z\in l$. The morphism $s:M(C)\to SB(C)$ defined by $(x,z,l)\mapsto (z,l)$ is separable of degree $2$ by \cite[Theorem 1.8, Lemma 3.5]{Dale}, and the morphism $t:SB(C)\to S$ defined by $t(z,l)=z$ is birational  by \cite[Theorem 4.1, Theorem 1.10]{Dale}. The composition $t\circ s\circ r:Q\to S$ then has degree $2$. Since it factors as the composition of the natural degree $2$ morphism $Q\to P$ and of $p:P\to S$, it follows that $p$ is indeed birational.

Since $C$ has genus $\geq 2$, the Abel-Jacobi map shows that $C^{(2)}$
contains only finitely many rational curves. Hence, the only family of rational curves that covers
$S$ is the one induced by the fibers of the $\mathbb{P}^1$-bundle structure, that is the family of secants of $C$.
The subset of $S$ consisting of points included in infinitely many of these secants is the image by $t$ of the union of the positive-dimensional fibers of $t$. 
A dimension count  shows that it is an algebraic variety of dimension  at most $1$.
Since $C$ is obviously contained in it, $C$ has to be an irreducible component of this locus. 
\end{proof}

Proposition \ref{mainth2bis} gives necessary conditions for a polarized family $(\pi:\mathcal{C}\to B,\mathcal{L})$ to induce a non-trivial family of non-degenerate smooth space curves $\phi:\mathcal{C}\hookrightarrow\mathbb{P}^3_{B}$ with $\mathcal{L}\simeq\phi^*\mathcal{O}_{\mathbb{P}^3}(1)$:
the first graded piece of the Harder-Narasimhan filtration of $\mathcal{E}:=\pi_*\mathcal{L}$ has to
be of rank $4$, and the corresponding sections have to induce embeddings of the fibers of $\pi$ in $\mathbb{P}^3$.
The proof of Theorem \ref{main2} follows:

\begin{proof}[Proof of Theorem \ref{main2}]
Let $\pi:\mathcal{C}\to \mathbb{P}^1$, $\phi:\mathcal{C}\hookrightarrow\mathbb{P}^3_{\mathbb{P}^1}$ be a
complete family of non-degenerate smooth space curves over
$\mathbb{P}^1$.
It is isotrivial by \cite[Th\'eor\`eme 4]{Szpiro}:
all the fibers of $\pi$ are isomorphic to a fixed curve $C$. By Chang and Ran \cite[Theorem 3]{ChangRan2}, $C$ has genus
$\geq 2$. Since the automorphism group of $C$ is finite \cite{Schmid} and
$\mathbb{P}^1$ is simply connected, the family has to be a product: $\mathcal{C}\simeq C\times \mathbb{P}^1$.

Since the Picard scheme $\Pic(C)$ does
not contain non-trivial rational curves, all the fibers are even isomorphic as polarized curves
and $\phi^*\mathcal{O}_{\mathbb{P}^3}(1)\simeq \mathcal{M}\boxtimes \mathcal{N}$ for some line bundles $\mathcal{M}$
(resp. $\mathcal{N}$) on $C$ (resp. $\mathbb{P}^1$).
Consequently, $\mathcal{E}:=\pi_*\phi^*\mathcal{O}_{\mathbb{P}^3}(1)$ is isomorphic to a direct sum of isomorphic line bundles, hence is strongly semistable.
It follows from Proposition \ref{mainth2bis} that the subbundle of $\mathcal{E}$ used to construct the embedding $\phi$ is $\mathcal{E}$ itself,
so that our family is trivial.
\end{proof}

\subsection{Constructing embeddings} We now provide a sufficient condition for an abstract family of curves to give rise to a complete family of non-degenerate smooth space curves, up to maybe replacing the base by a finite surjective cover.

\begin{prop}\label{emb}
Let $\pi :\mathcal{C}\to B$  be a smooth projective family of curves over a smooth projective curve.
Let $\mathcal{L}$ be a line bundle on $\mathcal{C}$ and $\mathcal{E}:=\pi_*\mathcal{L}$.
Let $\mathcal{F}\subset\mathcal{E}$ be a subbundle of rank $4$
such that for every $b\in B$, $\mathcal{F}|_b\subset H^0(\mathcal{C}_b,\mathcal{L}_b)$ embeds $\mathcal{C}_b$ in $\mathbb{P}^3$.

Suppose that one of the following conditions is satisfied:
\begin{enumerate}[(i)]
\item $\mathcal{F}$ is stable and $B$ is an elliptic curve,
\item $\mathcal{F}$ is strongly semistable and $\mathbbm{k}$ is the algebraic closure of a finite field.
\end{enumerate}
Then there exist a finite morphism from a smooth curve $f:B'\to B$  
and, denoting by $(\pi':\mathcal{C}'\to B', \mathcal{L}')$ the base-change, a complete family of non-degenerate smooth space curves $\phi':\mathcal{C}'\hookrightarrow\mathbb{P}^3_{B'}$ 
such that $\mathcal{L}'|_{\mathcal{C}'_b}\simeq\phi'^*\mathcal{O}(1)|_{\mathcal{C}'_b}$ for every $b\in B'$.

\nopagebreak[1]
Moreover, in case (i), $B'$ may be chosen isomorphic to $B$.
\end{prop}

\begin{proof}
By Propositions \ref{sssell} and \ref{sssfini}, there exists a finite morphism from a smooth curve $f:B'\to B$
such that $f^*\mathcal{F}$ is isomorphic to a direct sum of isomorphic line bundles.
By Lemma \ref{lemE}, $f^*\mathcal{F}$ is a subbundle of
$\pi'_*\mathcal{L}'$, and for every $b\in B$, $(f^*\mathcal{F})|_b\subset H^0(\mathcal{C}'_b,\mathcal{L}'_b)$
embeds $\mathcal{C}'_b$ in a non-degenerate way in $\mathbb{P}^3$. Consequently, $f^*\mathcal{F}$ induces
an embedding $\phi':\mathcal{C}'\hookrightarrow\mathbb{P}_{B'}(f^*\mathcal{F})$ that is non-degenerate over every $b\in B'$. Since this
projective bundle is trivial by our choice of $f$, we are done.

In case (i), one may choose $f$ to be an isogeny by Proposition \ref{sssell}. The isogeny $f$ factors some multiplication isogeny $[N]:B\to B$, allowing us to assume $B'=B$.
\end{proof}

\begin{rem}\label{remnt}
A family constructed by Proposition \ref{emb} is non-trivial if the $(\mathcal{C}_b,\mathcal{L}_b)$ are not all isomorphic as
polarized curves. In this case, Proposition \ref{mainth2bis} shows that $\mathcal{F}$ has to be the first graded piece of
the Harder-Narasimhan filtration of $\mathcal{E}$.
\end{rem}

\begin{rem}
In Proposition \ref{emb} (ii), the genus of the curve $B'$ is not explicit as the construction of $B'$
relies on the Lange-Stuhler theorem (Proposition \ref{sssfini}).
However, it follows from the proof of this theorem
\cite[Satz 1.4 b)]{LangeStuhler} that we can give bounds for the genus of $B'$
if we know an explicit Frobenius periodicity property for the strongly semistable vector bundle $\mathcal{F}$
(that is a relation of the form $F^{r*}\mathcal{F}\simeq F^{s*}\mathcal{F}\otimes \mathcal{N}$ for some $r\neq s$ and some line bundle $\mathcal{N}$).

 Fortunately, in our applications to Theorem \ref{main} (ii), we prove the strong
 semistability of the relevant vector bundle precisely by exhibiting such a relation
(see the proof of Corollary \ref{coross}). Consequently, Lange-Stuhler's proof provides bounds for the genus of
the base of the families constructed in Theorem \ref{main} (ii).
\end{rem}

\subsection{Curves of genus $2$ and degree $5$}\label{par25}
We may now give the:

\begin{proof}[Proof of Theorem \ref{main}]
Let $C$ be a smooth curve of genus $2$ and $\mathcal{L}$ be a degree $5$ line bundle on $C$. The Riemann-Roch theorem
shows that $h^1(C,\mathcal{L})=0$, $h^0(C,\mathcal{L})=4$, and that these four sections embed $C$ in $\mathbb{P}^3$. 
Let $A:=\Pic^5(C)$ be the variety parametrizing degree $5$ line bundles on $C$ and $\mathcal{P}$ be a Poincar\'e bundle on $C\times A$. 
By \cite[III Theorem 12.11]{Hartshorne}, the sheaf $\mathcal{E}:=p_{2*}\mathcal{P}$
is a rank $4$ vector bundle on $A$ whose formation commutes with base-change.

Let $B$ be a smooth projective curve and $i:B\to A$ be a non-constant morphism. Consider the constant family $\pi:\mathcal{C}:=C\times B\to B$ polarized by
$\mathcal{L}:=(\Id,i)^*\mathcal{P}$. By base-change, $\pi_*\mathcal{L}=i^*\mathcal{E}$. 
To prove Theorem \ref{main}, we apply Proposition \ref{emb} to polarized families
$(\mathcal{C}\to B,\mathcal{L})$ as above, with $\mathcal{F}:=i^*\mathcal{E}$.

Let us show that it is possible to choose $C$, $B$ and $i$ carefully so that the stability hypotheses (i) or (ii) in Proposition \ref{emb} are satisfied.
In the setting of Theorem \ref{main} (i), the curve $B$ be an elliptic curve, and Proposition \ref{Propjacnonsimple} below produces a genus $2$ curve $C$ and a non-constant morphism $i:B\to A$ such that $i^*\mathcal{E}$ is stable.  
In the setting of Theorem \ref{main} (ii), the field $\mathbbm{k}$ is of characteristic $p\equiv\pm 1[8]$, and Proposition \ref{ex2} proven in section \ref{secredmodp} produces a genus~$2$ curve $C$ 
and, setting $B:=C$, an immersion $i:B\to A$, both defined over $\bar{\mathbb{F}}_p$,
such that $i^*\mathcal{E}$ is strongly semistable. 

To conclude the proof, it remains to verify that the families of smooth space curves constructed by applying Proposition \ref{emb} are non-trivial. To do so, fix $b\in B$, and consider the polarized variety $(\mathcal{C}_b,\mathcal{L}_b)$. Since $i$ is non-constant and $\Aut(\mathcal{C}_b)$ is finite \cite{Schmid}, there are at most finitely many $b'\in B$ such that $(\mathcal{C}_b,\mathcal{L}_b)\simeq(\mathcal{C}_{b'},\mathcal{L}_{b'})$, allowing to apply Remark \ref{remnt}.
\end{proof}

\begin{rem}
\label{remimprove}
The difficulty of removing the assumption that $p\equiv\pm 1[8]$ in the statement of Theorem \ref{main} (ii) lies in the verification of the strong semistability assumption in Proposition \ref{emb} (ii), for an appropriate choice of $C$, $B$ and $i$. 
\end{rem}

Proposition \ref{Propjacnonsimple} relies on a construction of curves of genus $2$ whose jacobian is not simple, that is very well explained in
the first section of \cite{FreyKani}. We keep the notations $A$ and $\mathcal{E}$ of the proof of Theorem \ref{main} above.

\begin{prop}\label{Propjacnonsimple}
Let $B$ be an elliptic curve over $\mathbbm{k}$. Then there exist a genus $2$ curve $C$ and an immersion $i:B\to  A:=\Pic^5(C)$ such that $i^*\mathcal{E}$ is stable.
\end{prop}

\begin{proof}
Let $E$ be an elliptic curve over $\mathbbm{k}$ not isogenous to $B$. Let $n$ be an odd integer invertible in $\mathbbm{k}$. 
Choose an isomorphism $E[n]\stackrel{\sim}\longrightarrow B[n]$ whose graph $\Gamma$ is isotropic with respect to the Weil pairings on $E[n]$ and $B[n]$. 
Let $A:=(E\times B)/\Gamma$. The quotient of $A$ by the image $B$ of
$\{0\}\times B$ in $A$ is $(E\times B)/\langle \{0\}\times B,\Gamma\rangle\simeq E/E[n]\simeq E,$ yielding an
exact sequence $0\to B\to A\stackrel{q}\longrightarrow E\to 0$ of abelian varieties.
By \cite[Propositions 1.1 and 1.4]{FreyKani}, $A$ is isomorphic to the Jacobian of a smooth curve $C$ of genus $2$ and the
theta divisor of $A$ has degree $n$ on $B$. 

Choose an isomorphism $A\simeq \Pic^5(C)$ and let $i:B\to A$ be the inclusion of a general fiber of $q$. Suppose for contradiction that $i^*\mathcal{E}$ is not stable.
As $\det(\mathcal{E})$ is numerically equivalent to the opposite of the theta divisor  \cite[VII.4]{ACGH1}, the rank $4$ of $i^*\mathcal{E}$ is prime with its degree $-n$, showing that $i^*\mathcal{E}$ is not semistable.

By the existence of a relative Harder-Narasimhan filtration with respect to $q$ \cite[Theorem 2.3.2]{Huybrechts}, there exists a saturated subsheaf $\mathcal{F}\subset \mathcal{E}$ whose restriction to a general fiber
of $q$ destabilizes $\mathcal{E}$. Outside of a finite number of points of $A$, $\mathcal{F}$ is a vector bundle. Its determinant $\det(\mathcal{F})$
extends uniquely as a line bundle $\mathcal{N}$ on $A$ by smoothness of $A$.
By construction, $\mathcal{N}$ has degree $>-n$ on the fibers of $\mathcal{F}$. 

Consider the projection $u:E\times B\to A$. The isomorphism class of the line bundle $u^*\mathcal{N}$ is $\Gamma$-invariant. 
Since $E$ and $B$
are not isogenous, $\Pic(E\times B)\simeq \Pic(E)\oplus \Pic(B)$. The action of $\Gamma$ on $\Pic(E\times B)$ is easy to describe,
and one sees that $\Pic(E\times B)^{\Gamma}$ consists of line bundles of the form $\mathcal{N}_E\boxtimes \mathcal{N}_B$, where
$\mathcal{N}_E$ (resp. $\mathcal{N}_B$) have degree divisible by $n$ on $E$ (resp. $B$). Hence $\mathcal{N}\cdot B=u^*\mathcal{N}\cdot (\{0\}\times B)$ is a multiple of $n$.

Hence, the restriction of $\mathcal{F}$ to a general fiber of $q$ has non-negative degree. Equi\-valently, the restriction of $\mathcal{F}^\vee$
to a general fiber of $q$ has non-positive degree. Consequently, the vector bundle $\mathcal{E}^\vee$ is not ample.
This contradicts \cite[VII 2.2]{ACGH1} .
\end{proof}

\section{Constructing strongly semistable vector bundles}
\label{secredmodp}
In this section, $\mathbbm{k}$ is assumed to be of positive characteristic $p$.

Let $C$ be a smooth curve of genus $2$, $c\in C$ a point and $\mathcal{M}$ a degree $6$ line bundle on $C$.
Let $A:=\Pic^5(C)$, and $\mathcal{P}$ be the Poincar\'e bundle on
$C\times A$ normalized so that $\mathcal{P}|_{\{c\}\times A}\simeq \mathcal{O}_A$,
and $\mathcal{E}:=p_{2*}\mathcal{P}$. Let $i:C\to A$ be defined by $i(P):=\mathcal{M}\otimes\mathcal{O}_C(-P)$.

The main goal of this section is to prove the following proposition, thus completing the proof of
Theorem \ref{main} (ii) given in \S \ref{par25}. More precisely, Proposition \ref{ex2} follows from Lemma \ref{iso}, Corollary \ref{coross} and
Proposition \ref{unstable}.

\begin{prop}\label{ex2}
Suppose that $C$ has hyperelliptic equation $Z^2=X^6+Y^6$, and that $\mathcal{M}=\omega_C^{\otimes3}$.
Then $i^*\mathcal{E}$ is strongly semistable if and only if $p\equiv \pm 1[8]$.
\end{prop}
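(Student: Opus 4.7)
My plan is to deduce Proposition~\ref{ex2} from three intermediate results that will be established later in Section~\ref{Section3}. The guiding principle is that, once the ``Poincar\'e-bundle side'' is translated into the language of syzygies on $C$, the question becomes a concrete computation in the graded section ring of $\omega_C^{\otimes 3}$.

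First I would prove that $i^*\mathcal{E}$ is, up to a twist by the line bundle $\mathcal{O}_C(c)$, the syzygy bundle $\Syz(\mathcal{M}) := \ker\!\bigl(H^0(C,\mathcal{M})\otimes \mathcal{O}_C \to \mathcal{M}\bigr)$. This identification (Lemma~\ref{iso}) follows from base change: pull back the normalised Poincar\'e bundle along $(\Id,i)\colon C\times C\to C\times A$ to obtain $p_1^*\mathcal{M}\otimes\mathcal{O}(-\Delta)\otimes p_2^*\mathcal{O}(c)$, where the twist by $\mathcal{O}(c)$ is forced by the normalisation $\mathcal{P}|_{\{c\}\times A}\simeq \mathcal{O}_A$. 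Applying $p_{2*}$ to $0\to p_1^*\mathcal{M}(-\Delta)\to p_1^*\mathcal{M}\to p_1^*\mathcal{M}|_\Delta\to 0$ then recovers the evaluation exact sequence of $\mathcal{M}$. Since twisting by a line bundle preserves strong semistability, the problem reduces to $\Syz(\omega_C^{\otimes 3})$; and the equation of $C$ makes $H^0(C,\omega_C^{\otimes 3})$ explicit, spanned by the five sections $X^3,\ X^2Y,\ XY^2,\ Y^3,\ Z$.

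For the ``if'' direction ($p\equiv \pm 1\ [8]$), I would exhibit a Frobenius periodicity up to twist: integers $r>s\geq 0$ and a line bundle $\mathcal{N}$ on $C$ with $F^{r*}\Syz(\mathcal{M})\simeq F^{s*}\Syz(\mathcal{M})\otimes \mathcal{N}$. A standard argument (Corollary~\ref{coross}) shows that such a relation forces all Frobenius pullbacks of $\Syz(\mathcal{M})$ to be semistable, i.e.\ strong semistability. Following Brenner--Kaid, the periodicity should come from a concrete comparison of the syzygies of $X^{3p}, X^{2p}Y^p, X^pY^{2p}, Y^{3p}, Z^p$ with those of the original five generators, exploiting the relation $Z^{2}=X^6+Y^6$; the congruence $p\equiv\pm 1\ [8]$ is exactly what makes this comparison work. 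Where Brenner--Kaid rely on a priori semistability coming from Han--Monsky's Hilbert--Kunz computations, here the needed syzygy identities would instead be produced directly using the strong Lefschetz property of appropriate Artinian quotients of $k[X,Y,Z]/(Z^2-X^6-Y^6)$ (Paragraph~\ref{parsyz}).

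For the converse, when $p\equiv \pm 3\ [8]$, the same syzygy analysis produces an explicit destabilising sub-bundle in some Frobenius pullback $F^{k*}\Syz(\mathcal{M})$, and one reads off the full strong Harder--Narasimhan filtration (Proposition~\ref{unstable}); in particular $\Syz(\mathcal{M})$ is not strongly semistable. Assembling these three ingredients gives the claimed equivalence. The hard part will be the syzygy computations underlying both the periodicity (in the $\pm 1$ case) and the destabilisation (in the $\pm 3$ case): without a priori semistability, all the required identities in the section ring of $\omega_C^{\otimes 3}$ must be established by hand, and the residue of $p$ modulo $8$ enters exactly as the invariant controlling whether the relevant Lefschetz-type multiplication maps are bijective.
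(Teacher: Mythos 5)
Your proposal follows essentially the same route as the paper: Proposition~\ref{ex2} is deduced exactly from Lemma~\ref{iso} (the Poincar\'e-bundle/syzygy identification via the normalised pullback $p_1^*\mathcal{M}\otimes p_2^*\mathcal{O}(c)(-\Delta)$), Corollary~\ref{coross} (Frobenius periodicity up to twist when $p\equiv\pm1\,[8]$), and Proposition~\ref{unstable} (an explicit destabilising piece of $F^*\mathcal{S}$ when $p\equiv\pm3\,[8]$). The only small imprecision is that the strong Lefschetz property is not invoked for quotients of $\mathbbm{k}[X,Y,Z]/(Z^2-X^6-Y^6)$: the paper first descends through the hyperelliptic cover $f\colon C\to\mathbb{P}^1$, sandwiches $F^*\mathcal{S}$ between the pullbacks of the bundles $\mathcal{S}_\pm$ on $\mathbb{P}^1$, and applies Lefschetz to artinian monomial ideals in two variables after the substitution $x=X^6$, $y=Y^6$.
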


The restrictive assumptions on the curve $C$ and the line bundle $\mathcal{M}$
in the hypotheses of this proposition will be made explicitely later, when they become useful.


\subsection{Syzygy bundles}

Let us first recall what a syzygy bundle is. 

\begin{Def}\label{defsyz}
Let $X$ be a variety, let $(\mathcal{L}_i)_{1\leq i\leq n}$ be line bundles on $X$ and let $\sigma_i\in H^0( X,\mathcal{L}_i)$ be sections 
with no common zero. The syzygy bundle associated to these sections is the vector bundle of rank $n-1$ on $X$ defined by the exact sequence:
\begin{equation}
\label{defsyz}0\to\Syz_X(\sigma_i)\to\bigoplus_i\mathcal{L}_i^{-1}\xrightarrow{\oplus_i \sigma_i}\mathcal{O}_X\to 0.
\end{equation}
\end{Def}
If $\mathcal{N}$ is a line bundle on $X$, one can compute
$H^0(X,\Syz_X(\sigma_i)\otimes\mathcal{N})$ using (\ref{defsyz}): it consists of sections $\tau_i\in H^0(X, \mathcal{L}_i^{-1}\otimes\mathcal{N})$ such that $\sum_i \tau_i\sigma_i=0$.

If $\mathcal{L}$ is a base-point free line bundle on $X$ and the $\sigma_i$ form a base of $H^0(X,\mathcal{L})$, we set $\Syz_X(\mathcal{L}):=\Syz_X(\sigma_i)$.
Let $\mathcal{S}:=\Syz_C(\mathcal{M})$. 

\begin{lem}\label{iso}
There is an isomorphism $i^*\mathcal{E}\simeq \mathcal{S}\otimes\mathcal{M}(c)$.
\end{lem}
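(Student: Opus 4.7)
The plan is to pull the computation back to the product $C\times C$, identify the relevant line bundle there via the see-saw principle, and extract $\mathcal{S}\otimes\mathcal{M}(c)$ from the pushforward of the standard resolution of the diagonal.

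Consider the Cartesian square
\[
\begin{CD}
C \times C @>{j}>> C \times A \\
@V{p_2}VV @VV{p_2}V \\
C @>{i}>> A,
\end{CD}
\]
where $j:=\Id_C\times i$. The morphism $i$ is a closed immersion, being a translate of the negative of the Abel-Jacobi embedding. Since $h^1(C,L)=0$ for every $L\in\Pic^5(C)$, cohomology and base change applied to the smooth family $p_2:C\times A\to A$ with the line bundle $\mathcal{P}$ yields $i^*\mathcal{E}\simeq (p_2)_*(j^*\mathcal{P})$.

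Next, set $\mathcal{Q}:=p_1^*\mathcal{M}\otimes\mathcal{O}_{C\times C}(-\Delta)\otimes p_2^*\mathcal{O}_C(c)$. A direct restriction check gives $\mathcal{Q}|_{C\times\{P\}}\simeq\mathcal{M}(-P)\simeq j^*\mathcal{P}|_{C\times\{P\}}$ for every $P\in C$, and $\mathcal{Q}|_{\{c\}\times C}\simeq\mathcal{O}_C\simeq j^*\mathcal{P}|_{\{c\}\times C}$, the latter relying crucially on the normalization $\mathcal{P}|_{\{c\}\times A}\simeq\mathcal{O}_A$. The see-saw principle then forces $j^*\mathcal{P}\simeq \mathcal{Q}$.

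It remains to compute the pushforward. By the projection formula, $(p_2)_*\mathcal{Q}\simeq (p_2)_*(p_1^*\mathcal{M}(-\Delta))\otimes\mathcal{O}_C(c)$. Tensoring the short exact sequence $0\to\mathcal{O}_{C\times C}(-\Delta)\to\mathcal{O}_{C\times C}\to\mathcal{O}_\Delta\to 0$ by $p_1^*\mathcal{M}$ and pushing down via $p_2$, one obtains, after noting that $R^1(p_2)_*(p_1^*\mathcal{M}(-\Delta))=0$ because $\deg\mathcal{M}(-P)=5>2g-2$, the exact sequence
\[
0\to (p_2)_*(p_1^*\mathcal{M}(-\Delta))\to H^0(C,\mathcal{M})\otimes\mathcal{O}_C\to \mathcal{M}\to 0,
\]
whose third arrow is the evaluation map. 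Comparing with Definition \ref{defsyz} tensored by $\mathcal{M}$ identifies the kernel with $\mathcal{S}\otimes\mathcal{M}$. Assembling everything gives $i^*\mathcal{E}\simeq\mathcal{S}\otimes\mathcal{M}(c)$. The only non-routine point is the see-saw identification of $j^*\mathcal{P}$; once that is secured, the rest is formal bookkeeping.
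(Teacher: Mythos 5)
Your proposal is correct and follows essentially the same route as the paper: identify $(\Id,i)^*\mathcal{P}$ with $p_1^*\mathcal{M}\otimes p_2^*\mathcal{O}(c)(-\Delta)$ by restricting to the two rulings (the paper leaves the see-saw step implicit), then push forward the diagonal sequence and recognize the twisted syzygy bundle from the evaluation map. Your write-up merely makes explicit a few steps the paper compresses (base change for $i^*\mathcal{E}$, the projection formula for the $p_2^*\mathcal{O}(c)$ factor, and the $R^1$ vanishing).
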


\begin{proof}
Consider the pull-back $(\Id,i)^*\mathcal{P}$ of the Poincar\'e bundle on $C\times C$. Its restriction to $\{c\}\times C$ is trivial and its restriction to $C\times\{x\}$
is isomorphic to $\mathcal{M}(-x)$ for every $x\in C(\mathbbm{k})$. It follows that $(\Id,i)^*\mathcal{P}\simeq p_1^*\mathcal{M}\otimes p_2^*\mathcal{O}(c)(-\Delta)$, where $\Delta\subset C\times C$
is the diagonal. As a consequence, there is a short exact sequence on $C\times C$:
$$0\to(\Id,i)^*\mathcal{P}\to p_1^*\mathcal{M}\otimes p_2^*\mathcal{O}(c)\to(p_1^*\mathcal{M}\otimes p_2^*\mathcal{O}(c))|_{\Delta}\to 0.$$
Pushing it forward by $p_2$ and using the vanishing of the appropriate $H^1$, one gets:
$$0\to i^*\mathcal{E}\to H^0(C,\mathcal{M})\otimes \mathcal{O}(c)\to \mathcal{M}(c)\to 0,$$
where the arrow $H^0(C,\mathcal{M})\otimes \mathcal{O}(c)\to \mathcal{M}(c)$ is the evaluation.
One recognizes the definition of a syzygy bundle, up to a twist.
\end{proof}

From now on, we restrict to the case where $\mathcal{M}$ is the tricano\-nical line bundle $\omega_C^{\otimes 3}$. Since
 $\omega_C\simeq f^*\mathcal{O}(1)$, where  $f:C\to \mathbb{P}^1$ is the hyperelliptic double cover, this will allow us to compare
$F^*\mathcal{S}$ with bundles on $\mathbb{P}^1$, that are easier to describe.

Let $X, Y$ be homogeneous coordinates on $\mathbb{P}^1$ and $P(X,Y)$ be a degree $6$ homogeneous polynomial defining the ramification
locus of $f$: the curve $C$ is defined by $Z^2=P(X,Y)$. The canonical ring $\bigoplus_{i\geq 0}H^0(C,\omega_C^{\otimes i})$ of $C$ is then
isomorphic  to $\mathbbm{k}[X,Y,Z]/\langle Z^2-P(X,Y)\rangle$, where the generators $X, Y$ are of degree $1$ and $Z$ is of degree $3$.
In particular, it is isomorphic to $\mathbbm{k}[X,Y]\oplus\mathbbm{k}[X,Y]\cdot Z$ as a $\mathbbm{k}[X,Y]$-module.

Let us introduce the two following syzygy bundles on $\mathbb{P}^1$:
$$\left\{
    \begin{array}{ll}
       \mathcal{S}_+:=\Syz_{\mathbb{P}^1}(X^{3p},X^{2p}Y^p,X^pY^{2p},Y^{3p},P(X,Y)^{\frac{p+1}{2}}), \\
        \mathcal{S}_-:=\Syz_{\mathbb{P}^1}(X^{3p},X^{2p}Y^p,X^pY^{2p},Y^{3p},P(X,Y)^{\frac{p-1}{2}}).
    \end{array}
\right.$$

\begin{lem}\label{exact}
There is an exact sequence:
\begin{equation}
\label{splitsyz}
0\to F^*\mathcal{S}(\omega_C^{\otimes -3})\to f^*\mathcal{S}_+\oplus f^*\mathcal{S}_-(\omega_C^{\otimes -3})\to F^*\mathcal{S}\to 0.
\end{equation}
Moreover, if $m\geq 0$, the complex obtained by tensoring (\ref{splitsyz}) by $\omega_C^{\otimes m}$ and taking
global sections is exact. \end{lem}

\begin{proof}
From the definition of a syzygy bundle, one sees that:
$$F^*\mathcal{S}\simeq\Syz_C(X^{3p}, X^{2p}Y^p,X^pY^{2p},Y^{3p},Z^p).$$

It is easy to describe the morphisms in (\ref{splitsyz}) at the level of local sections.
The morphism $f^*\mathcal{S}_+\to F^*\mathcal{S}$ is $(A,B,C,D,E)\mapsto (A,B,C,D,ZE)$, and the morphism
$f^*\mathcal{S}_-(\omega_C^{\otimes -3})\to F^*\mathcal{S}$ is $(A,B,C,D,E)\mapsto (ZA,ZB,ZC,ZD,E)$. Similarly,
$F^*\mathcal{S}(\omega_C^{\otimes -3})\to f^*\mathcal{S}_+$ is $(A,B,C,D,E)\mapsto (ZA,ZB,ZC,ZD,E)$ and
$F^*\mathcal{S}(\omega_C^{\otimes -3})\to f^*\mathcal{S}_-(\omega_C^{\otimes -3})$ is $(A,B,C,D,E)\mapsto -(A,B,C,D,ZE)$.

To prove the exactness of (\ref{splitsyz}), it suffices to prove the second statement.
This is easy using the description of the canonical ring as $\mathbbm{k}[X,Y]\oplus\mathbbm{k}[X,Y]\cdot Z$.
\end{proof}

\subsection{The strong Lefschetz property and syzygy computations}\label{parsyz}

To compute
the syzygy bundles $\mathcal{S}_+$ and $\mathcal{S}_-$, we need to restrict the situation again, by choosing carefully the polynomial $P$.
We will take $P(X,Y)= X^6+Y^6$, so that $C$ is the curve of equation $Z^2=X^6+Y^6$. In particular, from now on, we suppose that $p\neq 2,3$
so that $C$ is indeed smooth.

Our main tool will be the strong Lefschetz property for homogeneous ideals.

\begin{Def}
Let $R:= \mathbbm{k} [x_1,\dots,x_n]$. A homogeneous Artinian ideal
$I \subset R$ satisfies the \textit{strong Lefschetz property}
if there is a linear form $l\in R_1$ such that for every $r,d\geq 0$, the multiplication map
$(R/I)_r\stackrel{\cdot l^d}{\longrightarrow}(R/I)_{r+d}$ is of maximal rank.
\end{Def}

\begin{lem}\label{Lefschetz}
Let $I\subset  \mathbbm{k} [x,y]$ be a homogeneous Artinian ideal. 
Suppose that $(R/I)_r=0$ for $r\geq p$.
Then $I$ satisfies the strong Lefschetz property.
\end{lem}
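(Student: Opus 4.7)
The plan is to pick a sufficiently general linear form $l \in R_1$ and show that multiplication by $l^d$ has maximal rank on each pair of graded pieces of $A := R/I$. First, after a linear change of coordinates, I may take $l = y$ and view $A$ as a graded $\mathbbm{k}[y]$-module. Because $A$ is Artinian, $y$ acts nilpotently on $A$; moreover, the characteristic hypothesis $(R/I)_r = 0$ for $r \geq p$ guarantees $y^p = 0$ in $A$, so that the graded structure theorem for finitely generated modules over the principal ideal domain $\mathbbm{k}[y]$ applies uniformly in any characteristic and yields a decomposition
\[
A \;\cong\; \bigoplus_j \mathbbm{k}[y]/(y^{n_j})(-b_j),
\]
in which each summand (a ``string'') occupies exactly the graded pieces of degrees $b_j, b_j+1, \dots, b_j + n_j - 1$.

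Next, a direct computation in this decomposition shows that the rank of $y^d \colon A_r \to A_{r+d}$ equals the number of strings covering both $r$ and $r+d$, while $\dim A_r$ equals the number of strings covering $r$. The strong Lefschetz property for $y$ is therefore equivalent to the following combinatorial \emph{staircase} condition on the pairs $(b_j, n_j)$: once the summands are ordered so that $b_j$ is non-decreasing, the endpoints $b_j + n_j - 1$ are non-increasing (equivalently, the intervals $[b_j, b_j + n_j - 1]$ are nested). Under the staircase condition, the number of strings covering both $r$ and $r+d$ is exactly $\min(\dim A_r, \dim A_{r+d})$, the maximal possible rank.

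Finally, I would verify the staircase condition for a generic $l$ by induction on $\dim_{\mathbbm{k}} A$. The idea is to pick a one-dimensional socle summand $\mathbbm{k} f \subset A$ at the top socle degree, form the graded Artinian quotient $A' := A/\mathbbm{k} f$ (which satisfies the same hypotheses), and apply the inductive hypothesis to obtain a staircase $\mathbbm{k}[y]$-decomposition of $A'$. Then, using the short exact sequence $0 \to \mathbbm{k} f \to A \to A' \to 0$, I would argue that for $l$ chosen generic with respect to both $A$ and $A'$, the new string in $A$ corresponding to $f$ begins at the smallest degree consistent with the structure of $A'$, thereby preserving the staircase property. The main obstacle will be precisely this last inductive step: the crucial two-variable input I would rely on is the genericity fact that for $l$ generic the $l$-socle of $A$ coincides with the full $R$-module socle of $A$, which forces the new Jordan block to extend downward as far as possible instead of sitting isolated at the top socle degree.
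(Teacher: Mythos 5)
Your reduction to the graded $\mathbbm{k}[y]$-module (``string'') decomposition of $A=R/I$, and the observation that nested strings force every $\cdot\, l^d\colon A_r\to A_{r+d}$ to have maximal rank, are both correct, and this is a genuinely different route from the paper's, which quotes the characteristic-$0$ result of its reference (proved there via the generic initial ideal and the classification of Borel-fixed ideals) and then checks that the characteristic-$0$ description of Borel-fixed ideals persists in characteristic $p$ under the hypothesis $(R/I)_r=0$ for $r\ge p$. But there is a genuine gap at the decisive step, and a telltale sign of it is where you invoke the characteristic hypothesis: the graded structure theorem over the PID $\mathbbm{k}[y]$ needs no hypothesis at all, so the string decomposition exists for any artinian $A$ in any characteristic, and the hypothesis must instead be used in proving the staircase condition. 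Your sketch of that step never uses it. As written, the argument would apply verbatim to $A=\mathbbm{k}[x,y]/(x^p,y^p)$ in characteristic $p$, for which the strong Lefschetz property fails ($\cdot\, l^{p}=0$ by the Frobenius while $A_0$ and $A_p$ are both nonzero) and the strings cannot be nested (each has length $\le p$ while the Hilbert function is supported on $[0,2p-2]$).

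The concrete false statement is the genericity fact on which your inductive step leans: for generic $l$, the $l$-socle $\ker(\cdot\, l)$ does \emph{not} coincide with the $R$-socle. Already for $A=\mathbbm{k}[x,y]/(x^3,y^3)$ in characteristic $0$, the $R$-socle is the line $\mathbbm{k}\,x^2y^2$, whereas $\ker(\cdot\, l)$ is $3$-dimensional for every linear form $l$: the number of Jordan blocks equals the Sperner number $\max_r\dim A_r=3$, as one sees from the nested strings $[0,4]\supset[1,3]\supset[2,2]$. Without this input, nothing forces the extension $0\to\mathbbm{k}f\to A\to A'\to 0$ to enlarge the \emph{outermost} string of $A'$ rather than a shorter one (or none). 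What you actually need is that $f$ lie in the image of $l^{\,s-b_1}\colon A_{b_1}\to A_s$, where $[b_1,s-1]$ is the longest string of $A'$ ending just below the socle degree $s$ --- and that is an instance of the very strong Lefschetz property being proved, so the induction is circular exactly at this point. The step can presumably be repaired, but it requires a genuinely new input (e.g.\ degeneration to the generic initial ideal, as in the paper's reference, or duality for the Gorenstein quotients $R/(I:f)$), and that is precisely where the hypothesis $(R/I)_r=0$ for $r\ge p$ must finally enter.
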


\begin{proof}
In characteristic $0$, this is \cite[Proposition 4.4]{Lefschetz}. 
In this proof, the characteristic $0$ hypothesis is only used for the explicit description of Borel-fixed ideals, applied to the generic initial ideal of $I$. The description of Borel-fixed ideals in positive cha\-racteristic $p$ \cite[Theorem 15.23]{Eisenbud} is
more complicated in general, but coincides with the simple one in characteristic $0$
when the condition that $(R/I)_r=0$ for $r\geq p$ is satisfied. Consequently, under this hypothesis, the proof goes through.
\end{proof}

It is now possible to prove:
\begin{prop} \label{syzcalcul}\hspace{1em}
\begin{enumerate}[(i)]
\item 
If $p\equiv 1[8]$, $\mathcal{S}_+\simeq \mathcal{O}(\frac{-15p-9}{4})\oplus\mathcal{O}(\frac{-15p-1}{4})^{\oplus 3}$ and  $\mathcal{S}_-\simeq \mathcal{O}(\frac{-15p+3}{4})^{\oplus 4}$.
\item If $p\equiv -1[8]$, $\mathcal{S}_+\simeq \mathcal{O}(\frac{-15p-3}{4})^{\oplus 4}$ and  $\mathcal{S}_-\simeq \mathcal{O}(\frac{-15p+1}{4})^{\oplus 3}\oplus\mathcal{O}(\frac{-15p+9}{4})$.
\item If $p\equiv 3[8]$,  $\mathcal{S}_+\simeq \mathcal{O}(\frac{-15p-7}{4})^{\oplus 2}\oplus\mathcal{O}(\frac{-15p+1}{4})^{\oplus 2}$ and  $\mathcal{S}_-\simeq  \mathcal{O}(\frac{-15p-3}{4})\oplus\mathcal{O}(\frac{-15p+5}{4})^{\oplus 3}$.
\item If $p\equiv -3[8]$,  $\mathcal{S}_+\simeq \mathcal{O}(\frac{-15p-5}{4})^{\oplus 3}\oplus\mathcal{O}(\frac{-15p+3}{4})$ and  $\mathcal{S}_-\simeq  \mathcal{O}(\frac{-15p-1}{4})^{\oplus 2}\oplus\mathcal{O}(\frac{-15p+7}{4})^{\oplus 2}$.
\end{enumerate}
\end{prop}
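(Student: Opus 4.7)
The plan is to compute the splitting types of $\mathcal{S}_+$ and $\mathcal{S}_-$ on $\mathbb{P}^1$ by reading off their Hilbert functions. By Grothendieck's splitting theorem, each splits as $\bigoplus_{j=1}^{4}\mathcal{O}(a_j^{\pm})$, and the defining short exact sequence fixes $\sum_j a_j^{\pm}=-15p\mp 3$. The sequence $m\mapsto h^{0}(\mathcal{S}_{\pm}(m))$ pins down the individual $a_j^{\pm}$, and from the long exact sequence of
$$0\to \mathcal{S}_{\pm}\to \mathcal{O}(-3p)^{\oplus 4}\oplus \mathcal{O}(-3(p\pm 1))\to \mathcal{O}\to 0,$$
$$h^{0}(\mathcal{S}_{\pm}(m))=4(m-3p+1)^{+}+(m-3(p\pm 1)+1)^{+}-\dim_{\mathbbm{k}}(I_{\pm})_{m},$$
where $I_{\pm}$ is the ideal generated by the five polynomials. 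So the question reduces to computing the Hilbert function of $R/I_{\pm}$ at the critical degrees, where $R=\mathbbm{k}[X,Y]$.

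Next, I would exploit the Frobenius-style structure of the first four generators: $(X^{3p},X^{2p}Y^{p},X^{p}Y^{2p},Y^{3p})=(X^{p},Y^{p})^{3}$ is the extension via $U=X^{p}, V=Y^{p}$ of $(U,V)^{3}\subset \mathbbm{k}[U,V]$. Thus $R/(X^{p},Y^{p})^{3}$ is a free $\mathbbm{k}[U,V]/(U,V)^{3}$-module with basis $\{X^{i}Y^{j}:0\le i,j<p\}$, and its Hilbert function is completely explicit. Including the fifth generator amounts to computing the rank of the multiplication map
$$\mu_{\pm}\colon (R/(X^{p},Y^{p})^{3})_{m-3(p\pm 1)}\xrightarrow{\ \cdot P^{(p\pm 1)/2}\ }(R/(X^{p},Y^{p})^{3})_{m},$$
since $\dim (R/I_{\pm})_{m}=\dim(R/(X^{p},Y^{p})^{3})_{m}-\operatorname{rk}(\mu_{\pm})$. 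The whole proposition is thereby reduced to determining $\operatorname{rk}(\mu_{\pm})$ at the critical degrees.

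To control $\operatorname{rk}(\mu_{\pm})$ I would invoke Lemma \ref{Lefschetz} on appropriate auxiliary Artinian ideals in two variables whose socle degree is less than $p$ (which is legitimate since $p\ge 5$). The key input is the factorization $P=X^{6}+Y^{6}=\prod_{k=1}^{6}L_{k}$ into six distinct linear forms (available because $p\ge 5$), so that $P^{(p\pm 1)/2}=\prod_{k=1}^{6}L_{k}^{(p\pm 1)/2}$. Decomposing $\mu_{\pm}$ as a successive composition of multiplications by $L_{k}^{(p\pm 1)/2}$ and applying SLP to each step yields upper bounds on the kernel; matching lower bounds come from exhibiting explicit syzygies in each case. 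The four splitting types then correspond to four different alignments of the exponents $(p\pm 1)/2$ and the socle degrees of the auxiliary quotients, precisely capturing the residue of $p$ modulo $8$.

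The main obstacle is that SLP is a statement about multiplication by a single linear form $l^{d}$, whereas $\mu_{\pm}$ is multiplication by a product of six linear factors of total degree $3(p\pm 1)$. Chaining SLP across the six factors does not automatically preserve maximal rank, and indeed whether it does is exactly what distinguishes the semistable cases $p\equiv\pm 1[8]$ from the unstable cases $p\equiv\pm 3[8]$: at one specific critical degree, an extra syzygy survives in the $\pm 3$ cases. The delicate bookkeeping of this survival, carried out via the strong Lefschetz property of truncations in $\mathbbm{k}[U,V]$ combined with the explicit $\mathbbm{k}[U,V]$-module structure of $R/(X^{p},Y^{p})^{3}$, is what produces the four distinct splittings.
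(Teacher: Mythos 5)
Your overall framing (Grothendieck splitting, known total degree, reduction to computing $h^0(\mathcal{S}_\pm(m))$ at critical twists, hence to the rank of multiplication by $P^{(p\pm1)/2}$ modulo $(X^p,Y^p)^3$) agrees with the paper's, but the core of the argument --- actually establishing maximal rank --- is missing, and the route you sketch for it does not close. You propose to factor $P=X^6+Y^6=\prod_k L_k$ and chain the strong Lefschetz property across the six factors $L_k^{(p\pm1)/2}$. As you yourself note, a composition of maximal-rank maps between graded pieces of varying dimensions need not have maximal rank, and SLP only controls multiplication by powers of a \emph{general} (or one specific) linear form, not by the six particular forms dividing $X^6+Y^6$. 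Calling the resolution of this ``delicate bookkeeping'' is not a proof; this is exactly the point where the work lies. There is also a second obstruction: the auxiliary Artinian quotient you would naturally use, $R/(X^p,Y^p)^3$, has nonzero graded pieces up to degree $4p-2$, so the hypothesis $(R/I)_r=0$ for $r\geq p$ in Lemma \ref{Lefschetz} fails for it in characteristic $p$, and you never exhibit ideals that both satisfy that hypothesis and govern the rank of $\mu_\pm$.

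The paper's proof avoids both problems with a different decomposition. The linear system in the coefficients of $(A,B,C,D,E)$ splits into six independent blocks according to the residue modulo $6$ of the exponent of $X$ (because $X^{3p},\dots,Y^{3p}$ shift that residue by fixed amounts and $(X^6+Y^6)^{(p\pm1)/2}$ only involves monomials $X^{6i}Y^{6j}$). Substituting $x=X^6$, $y=Y^6$ in each block turns $P^{(p\pm1)/2}$ into $(x+y)^{(p\pm1)/2}$ --- a power of a \emph{single linear form} --- and replaces $(X^p,Y^p)^3$ by a monomial ideal $I=\langle x^{\frac{p+1}{2}}, x^{\frac{p+2}{3}}y^{\frac{p-1}{6}}, x^{\frac{p+5}{6}}y^{\frac{p-1}{3}}, y^{\frac{p-1}{2}}\rangle$ whose quotient vanishes in degrees $\geq p-1$, so Lemma \ref{Lefschetz} applies; since $I$ is monomial, a scaling of variables identifies the rank of multiplication by $(x+y)^{d}$ with that by $(\alpha x+\beta y)^{d}$, i.e.\ by a power of a general linear form. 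Maximal rank then follows in one step per block, and the answer is read off by counting unknowns against equations in each block. Without this (or an equivalent) device, your argument does not establish the claimed splitting types; in particular it cannot yet distinguish $p\equiv\pm1\,[8]$ from $p\equiv\pm3\,[8]$.
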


\begin{proof}
We know the degrees of $\mathcal{S}_+$ and $\mathcal{S}_-$ from their definition. Moreover, by Grothendieck's theorem,
a vector bundle on $\mathbb{P}^1$ splits as a direct sum of line bundles. As a consequence, to prove the proposition, it is enough to compute
the global sections of some twists of $\mathcal{S}_+$ and $\mathcal{S}_-$. For instance, to prove that 
 $\mathcal{S}_+\simeq \mathcal{O}(\frac{-15p-9}{4})\oplus\mathcal{O}(\frac{-15p-1}{4})^{\oplus 3}$ if $p\equiv 1[8]$,
it is sufficient to show that $h^0(\mathbb{P}^1,\mathcal{S}_+(\frac{15p-3}{4}))=0$ and that $h^0(\mathbb{P}^1,\mathcal{S}_+(\frac{15p+1}{4}))=3$.

Even if the result depends only on $p$ modulo $8$, we distinguish between different values of $p$ modulo $24$.
As all the global section computations needed are similar, we only carry out one: assuming that $p\equiv 1[24]$, we prove that $h^0(\mathbb{P}^1,\mathcal{S}_+(\frac{15p+1}{4}))=3$. 

Applying global sections to an appropriate twist of the exact sequence defining $\mathcal{S}_+$,
we see that $H^0(\mathbb{P}^1,\mathcal{S}_+(\frac{15p+1}{4}))$ is the vector space of solutions of the equation:
\begin{equation}\label{system1}
AX^{3p}+BX^{2p}Y^p+CX^pY^{2p}+DY^{3p}+ E(X^6+Y^6)^{\frac{p+1}{2}}=0,
\end{equation}
where the unknowns $A,B,C,D,E$ are homogeneous polynomials in $X$ and $Y$, the first four being of degree $\frac{3p+1}{4}$
and $E$ being of degree $\frac{3p-11}{4}$. Equation (\ref{system1}) is a linear system in the coefficients
of $A,B, C,D,E$. 

The matrix of this linear system in the monomial bases has six rectangular blocks, as one sees by separating the monomials
according to the value modulo 6 of the exponent of $X$. Consequently, the solution space of (\ref{system1}) is the direct sum of the solution
spaces of six smaller systems, that we may solve independently. 

Let us look at the first one, obtained by keping in (\ref{system1}) only monomials in which the exponent of $X$ is a multiple of $6$.
Then, setting $x:=X^6$ and $y:=Y^6$, it is possible to write $A=X^3Y^4a(x,y)$, $B=X^4Y^3b(x,y)$, $C=X^5Y^2c(x,y)$, $D=Yd(x,y)$
and $E=Y^4e(x,y)$. Dividing by $Y^4$, we get the new equation:
\begin{equation}\label{system2}
ax^{\frac{p+1}{2}}+bx^{\frac{p+2}{3}}y^{\frac{p-1}{6}}+cx^{\frac{p+5}{6}}y^{\frac{p-1}{3}}+dy^{\frac{p-1}{2}}+ e(x+y)^{\frac{p+1}{2}}=0,
\end{equation}
where the unknowns $a,b,c,d,e$ are homogeneous polynomials in $x$ and $y$ of respective degrees
$\frac{p-9}{8}$, $\frac{p-9}{8}$, $\frac{p-9}{8}$, $\frac{p-1}{8}$ and $\frac{p-9}{8}$: it is a linear system in $\frac{5p+3}{8}$ unknowns and
as many equations.

Introduce the ideal $I:=\langle  x^{\frac{p+1}{2}}, x^{\frac{p+2}{3}}y^{\frac{p-1}{6}}, x^{\frac{p+5}{6}}y^{\frac{p-1}{3}}, y^{\frac{p-1}{2}}\rangle$ of $R:=\mathbbm{k}[x,y]$. The linear system (\ref{system2}) has maximal rank exactly when
$\cdot (x+y)^{\frac{p+1}{2}}:(R/I)_{\frac{p-9}{8}}\to(R/I)_{\frac{5p-5}{8}}$ has maximal rank. If $\alpha,\beta\in\mathbbm{k}^*$, this rank is equal to the rank of
multiplication by $(\alpha x+\beta y)^{\frac{p+1}{2}}$,
as one sees by performing the change of variables $x'=\alpha x$, $y'=\beta y$,
hence of the multiplication by a power of a general linear form.
By Lemma \ref{Lefschetz}, $I$ satisfies the strong Lefschetz property and such
multiplication maps have maximal rank. We have proven that (\ref{system2}) has maximal rank. Since it has as many unknowns as equations,
it has no nontrivial solution.

The same argument using the strong Lefschetz property shows that the five other sub-linear systems have maximal rank. Three of them (corresponding to exponents of $X$ congruent to $1$, $2$ and $3$ modulo $6$) have
exactly one more
unknown than equations. Another has as many unknowns as
equations (the one corresponding to exponents of $X$ congruent to $4$ modulo $6$), and the last one has more equations than unknowns. 
Consequently, only three have non-trivial solutions, and moreover a one-dimensional solution space. It follows, as wanted, that
 $h^0(\mathbb{P}^1,\mathcal{S}_+(\frac{15p+1}{4}))=3$.
\end{proof}

\begin{rem}
The matrices of the linear systems in the proof of Proposition \ref{syzcalcul} are complicated matrices of binomial coefficients,
very similar to those appearing in Han's thesis \cite{Han}.
It seems difficult to check directly that they are of maximal rank.
\end{rem}

\begin{rem}
Proposition \ref{syzcalcul} and Lemma \ref{exact} show at once that $F^*\mathcal{S}$ is unstable when $p\equiv \pm 3[8]$. We
will obtain more precise information in Paragraph \ref{parunst}.
\end{rem}

\subsection{Frobenius periodicity}

We are ready to prove the strong semistability of $\mathcal{S}$ when $p\equiv\pm 1[8]$. Denote by $R$ the ramification locus of $f$: it consists
of the points $P_i=[\zeta_i:1]$, where the $\zeta_i$ are the sixth roots of $-1$. We view $R$ either as a subset of $\mathbb{P}^1$
or as a subset of $C$. Note that these ramification points are
transitively permuted by the natural action of the group $\mu_6$ of sixth roots of unity on $\mathbb{P}^1$.

\begin{prop}
\label{propexact} 
There are exact sequences: 
 \begin{equation}
\label{exa1}
0\to F^*\mathcal{S}\to(\omega_C^{\otimes\frac{-15p+3}{4}})^{\oplus 5}\to\omega_C^{\otimes\frac{-15p+15}{4}}\to 0
\textrm{, if }p\equiv 1[8],
\end{equation}
 \begin{equation}
\label{exa2}
0\to\omega_C^{\otimes\frac{-15p-15}{4}}\to(\omega_C^{\otimes\frac{-15p-3}{4}})^{\oplus 5}\to F^*\mathcal{S}\to 0
\textrm{, if }p\equiv -1[8].
\end{equation}
\end{prop}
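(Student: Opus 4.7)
The approach is to combine Lemma \ref{exact} with Proposition \ref{syzcalcul} and manipulate the resulting short exact sequence. Substituting the decompositions of Proposition \ref{syzcalcul} into Lemma \ref{exact}, and pulling back via $f: C \to \mathbb{P}^1$ (which sends $\mathcal{O}_{\mathbb{P}^1}(1)$ to $\omega_C$), yields for $p \equiv -1[8]$ the short exact sequence
$$0 \to F^*\mathcal{S}(\omega_C^{\otimes -3}) \to \mathcal{A} \oplus \mathcal{B} \to F^*\mathcal{S} \to 0,$$
where $\mathcal{A} = (\omega_C^{\otimes \frac{-15p-3}{4}})^{\oplus 5}$ and $\mathcal{B} = (\omega_C^{\otimes \frac{-15p-11}{4}})^{\oplus 3}$. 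Since $\mathcal{A}$ already coincides with the middle term of (\ref{exa2}), the goal becomes to show that the restriction to $\mathcal{A}$ of the surjection onto $F^*\mathcal{S}$ remains surjective; equivalently, the projection $F^*\mathcal{S}(\omega_C^{\otimes -3}) \to \mathcal{B}$ induced by the inclusion into the middle is surjective. The kernel of $\mathcal{A} \to F^*\mathcal{S}$ is then a line bundle of the correct degree, and one identifies it with $\omega_C^{\otimes \frac{-15p-15}{4}}$.

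To verify surjectivity of $F^*\mathcal{S}(\omega_C^{\otimes -3}) \to \mathcal{B}$, I would use the explicit formula $(a, b, c, d, e) \mapsto ((Za, Zb, Zc, Zd, e), -(a, b, c, d, Ze))$ for the inclusion in Lemma \ref{exact}, combined with explicit lifts of the three generators of the $\mathcal{O}(\frac{-15p+1}{4})^{\oplus 3}$ summand of $\mathcal{S}_-$ on $\mathbb{P}^1$ (implicit in the syzygy computation of Proposition \ref{syzcalcul}) to sections of the appropriate twist of $F^*\mathcal{S}$.

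For the case $p \equiv 1[8]$, tensor Lemma \ref{exact} by $\omega_C^{\otimes 3}$, so that $F^*\mathcal{S}$ appears as the kernel on the left. The middle term becomes $(\omega_C^{\otimes \frac{-15p+3}{4}})^{\oplus 5} \oplus (\omega_C^{\otimes \frac{-15p+11}{4}})^{\oplus 3}$, and a dual argument shows that the projection $F^*\mathcal{S} \to (\omega_C^{\otimes \frac{-15p+3}{4}})^{\oplus 5}$ is injective, with cokernel $\omega_C^{\otimes \frac{-15p+15}{4}}$, yielding (\ref{exa1}).

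The main obstacle will be two-fold: first, verifying the required surjectivity (resp.\ injectivity) of the relevant projection, which amounts to constructing explicit sections or tracking the maps through the syzygy computation; second, identifying the resulting rank-$1$ piece precisely as $\omega_C^{\otimes \frac{-15p \mp 15}{4}}$, rather than as an arbitrary line bundle of the same degree. Both should follow from careful bookkeeping of the sections arising in the proof of Proposition \ref{syzcalcul}.
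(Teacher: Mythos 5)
Your reduction is the same as the paper's: combine Lemma \ref{exact} with Proposition \ref{syzcalcul}, split the middle term as $\mathcal{A}\oplus\mathcal{B}$ with $\mathcal{A}=(\omega_C^{\otimes\frac{-15p-3}{4}})^{\oplus 5}$, and observe that surjectivity of $\mathcal{A}\to F^*\mathcal{S}$ is equivalent to fiberwise surjectivity of $F^*\mathcal{S}(\omega_C^{\otimes -3})\to\mathcal{B}$ (dually for (\ref{exa1}), where one needs injectivity of $F^*\mathcal{S}\to(\omega_C^{\otimes\frac{-15p+3}{4}})^{\oplus 5}$ \emph{on every fiber}, not just as a sheaf map, so that the cokernel is a line bundle). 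The second "obstacle" you list is not one: the rank-one piece is pinned down by taking determinants, since $\det F^*\mathcal{S}=\omega_C^{\otimes -15p}$ and the middle term is a sum of powers of $\omega_C$.

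The genuine gap is your treatment of the first obstacle. Away from the ramification locus $R$ of the hyperelliptic cover, the maps $(A,B,C,D,E)\mapsto(A,B,C,D,ZE)$ and $(A,B,C,D,E)\mapsto(ZA,ZB,ZC,ZD,E)$ are fiberwise isomorphisms onto the relevant summands, so the entire content of the proposition is concentrated at the six points of $R$, where $Z=0$ and these maps drop rank. Your plan to handle this via "explicit lifts of the three generators" that are "implicit in the syzygy computation" cannot be carried out: Proposition \ref{syzcalcul} is proved by a maximal-rank argument (the strong Lefschetz property), which yields only the \emph{dimensions} of the spaces $H^0(\mathbb{P}^1,\mathcal{S}_\pm(m))$, never explicit generators whose coordinates you could evaluate at the points of $R$. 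What is actually needed at $R$ is a non-vanishing statement, e.g.\ for (\ref{exa2}) that the unique section $(A,B,C,D,E)$ of $\mathcal{S}_-(\frac{15p-9}{4})$ has $E(P)\neq 0$ for $P\in R$. The paper proves such statements by contradiction: if the relevant coordinate(s) vanished at one $P_i$, the transitive $\mu_6$-action on $R$ would produce six sections with analogous vanishing; the dimension bounds of Proposition \ref{syzcalcul} force a linear dependence among them; the dependence forces one coordinate to vanish at \emph{all} six points of $R$, hence to be divisible by $X^6+Y^6$; and dividing produces a non-zero section of a twist of $\mathcal{S}_+$ that Proposition \ref{syzcalcul} asserts has no sections. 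Without this symmetry-plus-dimension-count mechanism (or a substitute for it), the fiberwise statement at $R$ — which is the heart of the proposition — remains unproved.
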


\begin{proof}
We first construct (\ref{exa1}).
By Proposition \ref{syzcalcul},
the injective morphism in (\ref{splitsyz}) writes:
$F^*\mathcal{S}\to (\omega_C^{\otimes\frac{-15p+11}{4}})^{\oplus 3}\oplus(\omega_C^{\otimes\frac{-15p+3}{4}})\oplus (\omega_C^{\otimes\frac{-15p+3}{4}})^{\oplus 4}$. We will prove that the induced morphism $F^*\mathcal{S}\to (\omega_C^{\otimes\frac{-15p+3}{4}})^{\oplus 5}$ is injective in restriction to every point $P\in C$. This concludes because its quotient is then a line bundle,
isomorphic to $\omega_C^{\otimes\frac{-15p+15}{4}}$ for degree reasons.

 From its description, one sees that the morphism
$F^*\mathcal{S}\to f^*\mathcal{S}_-\simeq(\omega_C^{\otimes\frac{-15p+3}{4}})^{\oplus 4}$ is an isomorphism on the fibers outside
$R$, and that if $P\in R$, the kernel of $F^*\mathcal{S}|_P\to f^*\mathcal{S}_-|_P$
consists of syzygies $(A,B,C,D,E)$ such that $A,B,C,D$ vanish at $P$. It remains to see that this kernel is not killed by the composition
$F^*\mathcal{S}|_P\to f^*\mathcal{S}_+(\omega_C^{\otimes 3})|_P\to \omega_C^{\otimes\frac{-15p+3}{4}}|_P$ or equivalently that its image in
$f^*\mathcal{S}_+(\omega_C^{\otimes 3})|_P$
does not belong to $(\omega_C^{\otimes\frac{-15p+11}{4}})^{\oplus 3}|_P$. 

Suppose that it is not the case for $P=P_1$: then there exists a non-zero section
$(A,B,C,D,E)\in H^0(C, f^*\mathcal{S}_+(\omega_C^{\otimes\frac{15p+1}{4}}))
=H^0(\mathbb{P}^1, \mathcal{S}_+(\frac{15p+1}{4}))$ such that $A,B,C,D$ vanish at $ P_1$. Writing $A=(X-\zeta_1Y)\tilde{A}_1$,
$B=(X-\zeta_1Y)\tilde{B}_1$, $C=(X-\zeta_1Y)\tilde{C}_1$, $D=(X-\zeta_1Y)\tilde{D}_1$,
$\tilde{E}_1=\prod_{i=2}^6(X-\zeta_iY)E$, one gets a section
$\sigma_1=(\tilde{A}_1,\tilde{B}_1,\tilde{C}_1,\tilde{D}_1,\tilde {E}_1)\in
H^0(\mathbb{P}^1, \mathcal{S}_-(\frac{15p-3}{4}))$ such that $\tilde{E}_1$ vanishes at $P_2,\dots,P_6$. For symmetry reasons,
using the $\mu_6$-action, there exists,
for every $1\leq i\leq 6$ a non-zero section $\sigma_i=(\tilde{A}_i,\tilde{B}_i,\tilde{C}_i,\tilde{D}_i,\tilde {E}_i)
\in H^0(\mathbb{P}^1, \mathcal{S}_-(\frac{15p-3}{4}))$ such that $\tilde{E}_i$ vanishes at $P_j$ for $j\neq i$. Since
 $H^0(\mathbb{P}^1, \mathcal{S}_-(\frac{15p-3}{4}))$ is $4$-dimensional by Proposition \ref{syzcalcul}, these six
sections cannot be linearly independent:
for instance, $\sigma_1\in\langle\sigma_2,\dots,\sigma_6\rangle$. It follows that $\tilde{E}_1$ vanishes at all $P_i$.
Then $(\tilde{A}_1,\tilde{B}_1,\tilde{C}_1,\tilde{D}_1,\tilde{E}_1/(X^6+Y^6))\in H^0(\mathbb{P}^1, \mathcal{S}_+(\frac{15p-3}{4}))$ is non-zero, contradicting
Proposition \ref{syzcalcul}.  

Let us explain how to obtain (\ref{exa2}) by a similar argument. By Lemma \ref{exact} and Proposition \ref{syzcalcul},
there is a morphism $(\omega_C^{\otimes\frac{-15p-3}{4}})^{\oplus 5}\to F^*\mathcal{S}$, and it suffices to prove its surjectivity.
Using only the four factors coming from $\mathcal{S}_+$, one gets surjectivity at points $P\notin R$, and the fact that, if $P\in R$, all
$(A,B,C,D,E)\in  F^*\mathcal{S}|_P$ such that $E=0$  are in the image. Hence, it suffices to prove that the unique
section $(A,B,C,D,E)\in H^0(C,f^*\mathcal{S}_-(\omega_C^{\otimes\frac{15p-9}{4}}))
=H^0(\mathbb{P}^1,\mathcal{S}_-(\frac{15p-9}{4}))$ satisfies $E(P)\neq 0$. If it didn't, $E_R$ would vanish by symmetry, and
 $(A,B,C,D,E/(X^6+Y^6))\in H^0(\mathbb{P}^1,\mathcal{S}_+(\frac{15p-9}{4}))$ would be a non-zero section contradicting Proposition
\ref{syzcalcul}.
\end{proof}

\begin{prop}\label{Frobper} There are isomorphisms:
\begin{enumerate}[(i)]
\item $F^*\mathcal{S}\simeq \mathcal{S}(\omega_C^{\otimes\frac{15-15p}{4}})$, if $p\equiv 1[8]$,
\item $F^*\mathcal{S}\simeq \mathcal{S}^\vee(\omega_C^{\otimes\frac{-15-15p}{4}})$, if $p\equiv -1[8]$.
\end{enumerate}
\end{prop}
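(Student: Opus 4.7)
The plan is to exploit Proposition~\ref{propexact} by recognising both exact sequences, after an appropriate twist (or dualisation), as instances of the defining sequence of $\mathcal{S}(\omega_C^{\otimes 3})$. Indeed, by Definition~\ref{defsyz}, the syzygy bundle $\mathcal{S} = \Syz_C(\omega_C^{\otimes 3})$ fits, after twisting by $\omega_C^{\otimes 3}$, in a short exact sequence
$$0 \to \mathcal{S}(\omega_C^{\otimes 3}) \to \mathcal{O}_C^{\oplus 5} \to \omega_C^{\otimes 3} \to 0,$$
in which the surjection is given by a basis of $H^0(C,\omega_C^{\otimes 3})$, a space of dimension $5$ by Riemann--Roch.

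For part (i), I would twist the first sequence of Proposition~\ref{propexact} by $\omega_C^{\otimes (15p-3)/4}$ (an integer power since $p\equiv 1[8]$ forces $p\equiv 1[4]$), obtaining
$$0 \to F^*\mathcal{S}\bigl(\omega_C^{\otimes (15p-3)/4}\bigr) \to \mathcal{O}_C^{\oplus 5} \to \omega_C^{\otimes 3} \to 0.$$
The key observation is that any surjection $\mathcal{O}_C^{\oplus 5} \twoheadrightarrow \omega_C^{\otimes 3}$ corresponds to a choice of $5$ globally generating sections of $\omega_C^{\otimes 3}$; since the space of sections has dimension $5$, these form a basis, and the associated linear map $\mathbbm{k}^5 \to H^0(C,\omega_C^{\otimes 3})$ is an isomorphism. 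Two such surjections therefore differ by an element of $GL_5(\mathbbm{k})$ acting on $\mathcal{O}_C^{\oplus 5}$, and their kernels are isomorphic. This yields $F^*\mathcal{S}(\omega_C^{\otimes (15p-3)/4}) \simeq \mathcal{S}(\omega_C^{\otimes 3})$, which after untwisting gives statement (i).

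For part (ii), I would first dualise the second sequence of Proposition~\ref{propexact} to obtain
$$0 \to (F^*\mathcal{S})^\vee \to \bigl(\omega_C^{\otimes (15p+3)/4}\bigr)^{\oplus 5} \to \omega_C^{\otimes (15p+15)/4} \to 0,$$
and then twist by $\omega_C^{\otimes -(15p+3)/4}$ to land again on a sequence of the form $0 \to \mathcal{G} \to \mathcal{O}_C^{\oplus 5} \to \omega_C^{\otimes 3} \to 0$. The same uniqueness argument identifies $\mathcal{G}$ with $\mathcal{S}(\omega_C^{\otimes 3})$, from which (ii) follows by untwisting and dualising back.

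There is no serious obstacle beyond Proposition~\ref{propexact}: the argument is essentially formal. The only point worth verifying carefully is that the surjections appearing in the sequences of Proposition~\ref{propexact} really are surjective at the sheaf level (this is built into the statement) and that they land on a line bundle with $h^0 = 5$, so that the sections must form a basis. Once this is noted, the identification of the kernels is automatic.
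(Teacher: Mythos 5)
Your overall strategy is the same as the paper's: recognise the two sequences of Proposition~\ref{propexact}, after twisting (and, in case (ii), dualising), as the defining sequence of $\mathcal{S}$ up to a twist. But there is a genuine gap at the step you dismiss as automatic. You claim that a surjection $\mathcal{O}_C^{\oplus 5}\twoheadrightarrow\omega_C^{\otimes 3}$ onto a line bundle with $h^0=5$ must be given by a \emph{basis} of $H^0(C,\omega_C^{\otimes 3})$. This is false: sheaf-level surjectivity only says the five sections have no common zero, not that they are linearly independent. Since $\omega_C^{\otimes 3}$ is very ample of degree $6$ on a genus $2$ curve, already a general $2$-dimensional subspace of its sections is base-point free, so one can easily produce five linearly dependent sections that still generate; the kernel of the resulting surjection then contains a trivial direct summand (coming from the linear relations) and is \emph{not} isomorphic to $\mathcal{S}(\omega_C^{\otimes 3})$, which has no non-zero global sections. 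So the identification of the kernel is not automatic, and establishing the linear independence of the five sections $\tau_i$ appearing in Proposition~\ref{propexact} is precisely the content of the paper's proof.

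Concretely, for (i) the paper takes global sections of sequence (\ref{exa1}) twisted by $\omega_C^{\otimes\frac{15p-3}{4}}$, so that a linear relation among the $\tau_i$ would give a non-zero element of $H^0(C,F^*\mathcal{S}(\omega_C^{\otimes\frac{15p-3}{4}}))$; this space is shown to vanish using the second part of Lemma~\ref{exact} (with $m=\frac{15p-3}{4}$) together with the explicit splittings of $\mathcal{S}_\pm$ in Proposition~\ref{syzcalcul}. For (ii) the analogous statement is that the $\tau_i$ defining the injection of (\ref{exa2}) \emph{generate} $H^0(C,\omega_C^{\otimes 3})$; the paper proves this by a dimension count on $H^0(C,F^*\mathcal{S}(\omega_C^{\otimes\frac{15p+7}{4}}))$ (again via Lemma~\ref{exact} and Proposition~\ref{syzcalcul}) which forces $H^1(C,\omega_C^{\otimes -2})\to H^1(C,\omega_C)^{\oplus 5}$ to be injective, and then applies Serre duality. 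Your proposal would be complete once you supply these vanishing and generation arguments; without them the "uniqueness of the surjection up to $GL_5(\mathbbm{k})$" step does not go through.
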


\begin{proof}
 Denote by $\sigma_i\in H^0(C,\omega_C^{\otimes 3})$ the sections appearing in the last arrow
of (\ref{exa1}).
Tensoring (\ref{exa1}) by $\omega_C^{\otimes\frac{15p-3}{4}}$ and taking cohomology, one gets:
$$0\to H^0(C,F^*\mathcal{S}(\omega_C^{\otimes\frac{15p-3}{4}}))\to\mathbbm{k}^{\oplus 5}
\stackrel{\sigma_i}{\longrightarrow} H^0(C,\omega_C^{\otimes 3}).$$
But $ H^0(C,F^*\mathcal{S}(\omega_C^{\otimes\frac{15p-3}{4}}))=0$ by the second part of Lemma \ref{exact} applied with $m=\frac{15p-3}{4}$
and Proposition \ref{syzcalcul}. Thus, the $\sigma_i$ are linearly independant and (\ref{exa1}) is, up to a twist, the exact sequence defining the syzygy bundle $\mathcal{S}$, proving (i).

We prove (ii) in a similar way. Denote by $\tau_i\in H^0(C,\omega_C^{\otimes 3})$
the sections appearing in the first arrow of (\ref{exa2}). Tensoring it by $\omega_C^{\otimes\frac{15p+7}{4}}$
and taking cohomology, one gets:
$$0\to H^0(C,\omega_C)^{\oplus 5}\to H^0(C, F^*\mathcal{S}(\omega_C^{\otimes\frac{15p+7}{4}}))\to 
H^1(C, \omega_C^{\otimes -2})\stackrel{\tau_i}{\longrightarrow} H^1(C,\omega_C)^{\oplus 5}.$$
The vector space $H^0(C,\omega_C)^{\oplus 5}$ is $10$-dimensional. By the second part
of Lemma \ref{exact} and Proposition \ref{syzcalcul}, one sees that $H^0(C, F^*\mathcal{S}(\omega_C^{\otimes\frac{15p+7}{4}}))$ has dimension $\leq 10$.
It follows that $H^1(C, \omega_C^{\otimes -2}))\to H^1(C,\omega_C)^{\oplus 5}$
is injective. This map being Serre-dual to $\mathbbm{k}^{\oplus 5}
\stackrel{\tau_i}{\longrightarrow} H^0(C,\omega_C^{\otimes 3})$, the $\tau_i$ generate $H^0(C,\omega_C^{\otimes 3})$. Hence,
the dual of (\ref{exa2}) is, up to a twist, the exact sequence defining the syzygy bundle $\mathcal{S}$, proving (ii).
\end{proof}

\begin{cor}\label{coross}
If $p\equiv\pm 1[8]$, $\mathcal{S}$ is strongly semistable. 
\end{cor}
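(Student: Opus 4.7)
The plan is to deduce strong semistability of $\mathcal{S}$ from the Frobenius-periodicity-up-to-twist obtained in Proposition \ref{Frobper}. In case (i), the relation $F^{*}\mathcal{S}\simeq\mathcal{S}\otimes\mathcal{L}$ is already of the shape $F^{r*}\mathcal{S}\simeq\mathcal{S}\otimes\mathcal{M}$ with $r=1$. In case (ii), the isomorphism $F^{*}\mathcal{S}\simeq\mathcal{S}^{\vee}\otimes\mathcal{L}$ involves the dual; applying $F^{*}$ once more and using $F^{*}(\mathcal{S}^{\vee})\simeq(F^{*}\mathcal{S})^{\vee}$ yields $F^{2*}\mathcal{S}\simeq\mathcal{S}\otimes\mathcal{L}^{-1}\otimes F^{*}\mathcal{L}$, again of the required shape, with $r=2$.

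The key step is then to show that a bundle $\mathcal{S}$ satisfying a relation $F^{r*}\mathcal{S}\simeq\mathcal{S}\otimes\mathcal{M}$ on a smooth projective curve is semistable. Assume for contradiction that $\mathcal{F}\subset\mathcal{S}$ realises $\mu(\mathcal{F})=\mu_{\max}(\mathcal{S})>\mu(\mathcal{S})$. Comparing degrees on the two sides of the periodicity isomorphism forces $\deg(\mathcal{M})=(p^{r}-1)\mu(\mathcal{S})$. Pulling back $\mathcal{F}$ by $F^{r}$ and untwisting by $\mathcal{M}^{-1}$ produces a subsheaf $F^{r*}\mathcal{F}\otimes\mathcal{M}^{-1}\subset\mathcal{S}$ of slope $p^{r}\mu(\mathcal{F})-(p^{r}-1)\mu(\mathcal{S})=\mu(\mathcal{S})+p^{r}\bigl(\mu(\mathcal{F})-\mu(\mathcal{S})\bigr)$, which is strictly greater than $\mu(\mathcal{F})=\mu_{\max}(\mathcal{S})$, a contradiction.

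With semistability of $\mathcal{S}$ in hand, strong semistability follows formally. Every $F^{k*}\mathcal{S}$ is isomorphic to $\mathcal{S}$, or in case (ii) for $k$ odd to $\mathcal{S}^{\vee}$, tensored with an appropriate line bundle. Since semistability of a bundle on a smooth curve is preserved by duality and by tensoring with line bundles, each $F^{k*}\mathcal{S}$ is semistable, which is precisely the definition of strong semistability. The real content of the corollary lies entirely in Proposition \ref{Frobper}; the reduction performed here is of a standard type and should present no essential difficulty.
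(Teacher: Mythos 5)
Your proof is correct and takes essentially the same route as the paper: both deduce semistability from the Frobenius periodicity up to twist of Proposition \ref{Frobper} by the standard slope computation applied to a maximal destabilizing subsheaf, and then obtain strong semistability from the periodicity. Your write-up is if anything slightly more explicit than the paper's about the degree of the twist and about why the odd Frobenius pull-backs in case (ii) (which are twists of $\mathcal{S}^{\vee}$) remain semistable.
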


\begin{proof}
Proposition \ref{Frobper} shows that when $p\equiv\pm 1[8]$, 
$\mathcal{S}$ is Frobenius periodic up to a twist: $F^{2*}\mathcal{S}\simeq\mathcal{S}(\omega_C^{\otimes\frac{15p^2-15}{4}})$. It is classical that such bundles are strongly semistable. We recall the argument.
Suppose that $\mathcal{S}$ is not semistable, and let $\mathcal{F}\subset\mathcal{S}$ be the first graded piece of its Harder-Narasimhan filtration. 
Then $F^{2*}\mathcal{F}(\omega_C^{\otimes\frac{-15p^2+15}{4}})\subset \mathcal{S}$
has greater slope than $\mathcal{F}$, a contradiction. Hence $\mathcal{S}$ is semistable. By the periodicity property, so are all its Frobenius pull-backs.
\end{proof}

\subsection{Unstability}\label{parunst}

Let us now describe what happens when $p\equiv \pm3 [8]$.

\begin{prop}\label{unstable}
If $p\equiv \pm 3 [8]$, then $F^*\mathcal{S}$ is not semistable and its Harder-Narasimhan filtration is strong. This filtration is of the form:
\begin{enumerate}[(i)]
\item  $0\to \mathcal{T}\to F^*\mathcal{S}\to \omega_C^{\otimes\frac{-15p-3}{4}}\to 0$  if $p\equiv 3[8]$,
\item $0\to \omega_C^{\otimes\frac{-15p+3}{4}}\to F^*\mathcal{S}\to \mathcal{T} \to 0$ if $p\equiv -3[8]$.
\end{enumerate}
\end{prop}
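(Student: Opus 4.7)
The plan is to follow the template of Propositions \ref{propexact} and \ref{Frobper}, now in the regime where $F^{*}\mathcal{S}$ becomes unstable. Consider first the case $p \equiv 3[8]$. Twisting the exact sequence of Lemma \ref{exact} by $\omega_C^{\otimes 3}$ yields
\[
0 \to F^*\mathcal{S} \to f^*\mathcal{S}_+(\omega_C^{\otimes 3}) \oplus f^*\mathcal{S}_- \to F^*\mathcal{S}(\omega_C^{\otimes 3}) \to 0,
\]
and by Proposition \ref{syzcalcul}(iii) the middle term decomposes as
\[
\omega_C^{\otimes\frac{-15p-3}{4}} \oplus \bigl(\omega_C^{\otimes\frac{-15p+5}{4}}\bigr)^{\oplus 5} \oplus \bigl(\omega_C^{\otimes\frac{-15p+13}{4}}\bigr)^{\oplus 2}.
\]
The summand $\omega_C^{\otimes\frac{-15p-3}{4}}$ is the unique one of smallest degree and comes from $f^*\mathcal{S}_-$. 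I would first show that the composition $F^*\mathcal{S} \hookrightarrow f^*\mathcal{S}_+(\omega_C^{\otimes 3}) \oplus f^*\mathcal{S}_- \twoheadrightarrow \omega_C^{\otimes\frac{-15p-3}{4}}$ is surjective by a pointwise argument in the spirit of Proposition \ref{propexact}: outside the ramification locus $R$ of $f$ one of the remaining summands of $f^*\mathcal{S}_-$ alone already realises the surjection, while at a point of $R$ the $\mu_6$-symmetry permuting $R$ together with the section counts of Proposition \ref{syzcalcul}(iii) rule out the existence of a common zero.

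This produces a short exact sequence $0 \to \mathcal{T} \to F^*\mathcal{S} \to \omega_C^{\otimes\frac{-15p-3}{4}} \to 0$ in which $\mathcal{T}$ has rank $3$ and slope $\frac{-15p+1}{2} > -\frac{15p}{2} = \mu(F^*\mathcal{S})$, so in particular $F^*\mathcal{S}$ is not semistable. To identify this as the Harder-Narasimhan filtration I would bound $\Hom(\omega_C^{\otimes a}, F^*\mathcal{S})$ for $a > \frac{-15p+1}{4}$ using the second statement of Lemma \ref{exact}: applied to a sufficient twist, each summand of the resulting middle $H^0$ is a negative-degree line bundle on $C$ and therefore vanishes, ruling out line subbundles of higher slope. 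An analogous $H^0$ computation on $\wedge^r F^*\mathcal{S}$ rules out destabilising subsheaves of rank $2$ and $3$ and establishes the semistability of $\mathcal{T}$.

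The principal obstacle is to show that this filtration is \emph{strong}, equivalently that $\mathcal{T}$ is strongly semistable. Two lines of attack are natural. One can try to iterate: for each $k \geq 1$, view $F^{k*}\mathcal{T}$ as the first graded piece of the Harder-Narasimhan filtration of $F^{(k+1)*}\mathcal{S}$, computed by the same syzygy method with $p$ replaced by $p^{k+1}$. The difficulty is that the strong Lefschetz argument used in Proposition \ref{syzcalcul} rests on the degree bound of Lemma \ref{Lefschetz}, which does not survive when the exponents in the artinian quotient grow like $p^{k+1}$; some alternative input is needed. Alternatively, noting that $p^2 \equiv 1[8]$, one can attempt to establish a Frobenius periodicity $F^{2r*}\mathcal{T} \simeq \mathcal{T}\otimes\mathcal{N}$ for some $r$ and some line bundle $\mathcal{N}$, modelled on Proposition \ref{Frobper}; the argument of Corollary \ref{coross} then forces strong semistability.

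Finally, the case $p \equiv -3[8]$ is handled by the symmetric construction. By Proposition \ref{syzcalcul}(iv), $\omega_C^{\otimes\frac{-15p+3}{4}}$ appears as the unique highest-degree summand of $f^*\mathcal{S}_+$ in the untwisted sequence of Lemma \ref{exact}, and the same pointwise analysis shows that the induced map $\omega_C^{\otimes\frac{-15p+3}{4}} \hookrightarrow f^*\mathcal{S}_+ \oplus f^*\mathcal{S}_-(\omega_C^{\otimes -3}) \twoheadrightarrow F^*\mathcal{S}$ is injective, yielding the filtration of part (ii). Equivalently, applying the construction of case (i) to $\mathcal{S}^\vee$ and dualising produces the desired exact sequence directly.
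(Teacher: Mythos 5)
Your construction of the destabilizing quotient is the paper's: you project $F^*\mathcal{S}$ onto the unique lowest summand $\omega_C^{\otimes\frac{-15p-3}{4}}$ of $f^*\mathcal{S}_-$, prove surjectivity pointwise via the $\mu_6$-symmetry and the section counts of Proposition \ref{syzcalcul}, and the slope computation $\mu(\mathcal{T})=\frac{-15p+1}{2}>\mu(F^*\mathcal{S})$ is right. But there is a genuine gap at exactly the point you flag as ``the principal obstacle'': you never prove that $\mathcal{T}$ is strongly semistable, and neither of your two proposed routes is carried out --- you concede that the Lefschetz degree bound of Lemma \ref{Lefschetz} does not survive iteration of Frobenius, and the periodicity $F^{2r*}\mathcal{T}\simeq\mathcal{T}\otimes\mathcal{N}$ is only an ``attempt.'' Since the content of the proposition is precisely that the Harder--Narasimhan filtration is \emph{strong}, the proof is incomplete as it stands. (Your auxiliary plan for ordinary semistability of $\mathcal{T}$ via bounds on $\operatorname{Hom}(\omega_C^{\otimes a},F^*\mathcal{S})$ and on sections of $\wedge^r F^*\mathcal{S}$ is also only sketched, and at best would give semistability, not strong semistability.)

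The missing idea is already sitting in the decomposition of the middle term that you wrote down but did not use: the two top summands of $\mathcal{S}_+$ in case (iii) of Proposition \ref{syzcalcul} give a morphism $(\omega_C^{\otimes\frac{-15p+1}{4}})^{\oplus 2}\to f^*\mathcal{S}_+\to F^*\mathcal{S}$, and the same $\mu_6$-symmetry/division-by-$X^6+Y^6$ argument (now applied to sections of $\mathcal{S}_+(\frac{15p-1}{4})$ and $\mathcal{S}_-(\frac{15p-5}{4})$) shows it is injective on every fiber. Since there are no nonzero maps $\omega_C^{\otimes\frac{-15p+1}{4}}\to\omega_C^{\otimes\frac{-15p-3}{4}}$ for degree reasons, this subbundle factors through $\mathcal{T}$, and a degree count identifies the quotient as $\omega_C^{\otimes\frac{-15p+1}{4}}$, i.e.
$0\to(\omega_C^{\otimes\frac{-15p+1}{4}})^{\oplus 2}\to\mathcal{T}\to\omega_C^{\otimes\frac{-15p+1}{4}}\to 0$.
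Thus $\mathcal{T}$ is an extension of strongly semistable bundles of equal slope, hence strongly semistable with no further Frobenius pull-backs or Lefschetz input needed; and a two-step filtration with strongly semistable graded pieces of strictly decreasing slopes is automatically the strong Harder--Narasimhan filtration, which also renders your $\operatorname{Hom}$ and $\wedge^r$ computations unnecessary. The case $p\equiv-3[8]$ is then handled symmetrically, as you indicate.
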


\begin{proof} We will only prove (i), as the second statement is similar. From Lemma \ref{exact} and Proposition \ref{syzcalcul}, we get a morphism
$F^*\mathcal{S}\to f^*\mathcal{S}_-\to\omega_C^{\otimes\frac{-15p-3}{4}}$. Let us prove that it is surjective. Since
$F^*\mathcal{S}\to f^*\mathcal{S}_-$ is surjective at all points $P\notin R$, and since if $P\in R$,
the image of $F^*\mathcal{S}|_P\to f^*\mathcal{S}_-|_P$
consists of syzygies $(A,B,C,D,E)$ such that $E(P)=0$, we need to show that not all sections
$(A,B,C,D,E)\in H^0(C, f^*\mathcal{S}_-(\omega_C^{\otimes\frac{15p-5}{4}}))=H^0(\mathbb{P}^1,\mathcal{S}_-(\frac{15p-5}{4}))$ satisfy $E(P)=0$.
Suppose it is not the case: then, by symmetry using the $\mu_6$-action,
for all sections $(A,B,C,D,E)\in H^0(\mathbb{P}^1,\mathcal{S}_-(\frac{15p-5}{4}))$, $E$ would 
vanish on $R$. Dividing $E$ by $X^6+Y^6$, we would get a non-sero section in $H^0(\mathbb{P}^1,\mathcal{S}_+(\frac{15p-5}{4}))$, contradicting
Proposition \ref{syzcalcul}. Hence our morphism was surjective, and we denote its kernel by $\mathcal{T}$.

From Lemma \ref{exact} and Proposition \ref{syzcalcul} again, we get a morphism $(\omega_C^{\otimes\frac{-15p+1}{4}})^{\oplus 2}
\to f^*\mathcal{S}_+\to F^*\mathcal{S}$. Let us prove that it is injective on every fiber. Since $f^*\mathcal{S}_+\to F^*\mathcal{S}$ is injective
on the fibers at $P\notin R$, and since, if $P\in R$, the kernel of $f^*\mathcal{S}_+|_P\to F^*\mathcal{S}|_P$ consists of syzygies
$(A,B,C,D,E)$ such that $A,B,C,D$ all vanish at $P$, it suffices to rule out the existence of a section $(A,B,C,D,E)\in H^0(C,f^*\mathcal{S}_+(\omega_C^{\otimes\frac{15p-1}{4}}))= H^0(\mathbb{P}^1,\mathcal{S}_+(\frac{15p-1}{4}))$
such that $A,B,C,D$ all vanish at $P$. We proceed by contradiction. Then, for symmetry reasons, there exist for $1\leq i\leq 6$ a section
$(A_i,B_i,C_i,D_i,E_i)\in H^0(\mathbb{P}^1,\mathcal{S}_+(\frac{15p-1}{4}))$ such that $A_i,B_i,C_i,D_i$ all vanish at $P_i$.
Dividing $A_i$, $B_i$, $C_i$, $D_i$ by $X-\zeta_i Y$ and multiplying $E$ by $\prod_{j\neq i}(X-\zeta_j Y)$, we get non-zero sections
$\sigma_i=(\tilde{A}_i,\tilde{B}_i,\tilde{C}_i,\tilde{D}_i,\tilde {E}_i)\in H^0(\mathbb{P}^1,\mathcal{S}_-(\frac{15p-5}{4}))$ such that 
$\tilde{E}_i$ vanishes at $P_j$ for $j\neq i$. By Proposition \ref{syzcalcul}, $H^0(\mathbb{P}^1,\mathcal{S}_-(\frac{15p-5}{4}))$
is $3$-dimensional, hence the $\sigma _i$ cannot be linearly independent, say $\sigma_1\in\langle \sigma_2,\dots, \sigma_6\rangle$.
Then $\tilde{E}_1$ vanishes at all the $P_i$ and $(\tilde{A}_1,\tilde{B}_1,\tilde{C}_1,\tilde{D}_1,\tilde {E}_1/(X^6+Y^6))\in
H^0(\mathbb{P}^1,\mathcal{S}_+(\frac{15p-5}{4}))$ is a non-zero section contradicting Proposition \ref{syzcalcul}.

Since there are obviously no non-zero morphisms $\omega_C^{\otimes\frac{-15p+1}{4}}\to\omega_C^{\otimes\frac{-15p-3}{4}}$, the subbundle
$(\omega_C^{\otimes\frac{-15p+1}{4}})^{\oplus 2}$ factors through $\mathcal{T}$, and a degree computation shows that
this realizes $\mathcal{T}$ as an extension:
\begin{equation}\label{extension}0\to(\omega_C^{\otimes\frac{-15p+1}{4}})^{\oplus 2}\to\mathcal{T}\to\omega_C^{\otimes\frac{-15p+1}{4}}\to 0.
\end{equation}
Now $\mathcal{T}$ is strongly semistable as an extension of strongly semistable bundles of the same slope, and writing $F^*\mathcal{S}$ as
an extension of $\omega_C^{\otimes\frac{-15p-3}{4}}$ by $\mathcal{T}$ indeed realizes the Harder-Narasimhan filtration of $F^*\mathcal{S}$.
\end{proof}

\subsection{Hilbert-Kunz multiplicities}\label{HK}
We now apply our results to the computation of Hilbert-Kunz multiplicities.
Let us first recall the definition.

\begin{Def}\label{DefHK}
Let $A$ be a noetherian $n$-dimensional ring of characteristic $p$ and $\mathfrak{m}$ be a maximal ideal of $A$. Let $\mathfrak{m}^{[e]}$ be the ideal of $A$ generated by $p^e$-th powers of elements of $\mathfrak{m}$.
The \textit{Hilbert-Kunz multiplicity} of $(A,\mathfrak{m})$
is defined to be: $$e_{\HK}(A,\mathfrak{m}):=\lim_{e\to\infty} \frac{\length(A/\mathfrak{m}^{[e]})}{p^{ne}}.$$
\end{Def}

This invariant was first considered by Kunz \cite{Kunz}, and the limit was shown to exist and to be finite by Monsky \cite{HilbertKunz}.
It is difficult to compute in general.

We will be interested in the following geometric case:

\begin{Def}
\label{defHK}
Let $C$ be a smooth curve endowed with a line bundle $\mathcal{L}$ whose sections embed $C$ as a projectively normal curve.
Consider the section ring $A:=\bigoplus_{l\geq 0}H^0(C,\mathcal{L}^{\otimes l})$ with its maximal ideal
$\mathfrak{m}:=\bigoplus_{l>0}H^0(C,\mathcal{L}^{\otimes l})$.
Define: $$e_{\HK}(C,\mathcal{L}):=e_{\HK}(A,\mathfrak{m}).$$
\end{Def}
In this particular case,  Brenner \cite[Theorem 1]{Brenner} and Trivedi \cite[Theorem 4.12]{Trivedi} have related
the Hilbert-Kunz multiplicity to properties of a syzygy bundle:

\begin{thm}[Brenner, Trivedi]
\label{BrennerTrivedi}
Let $C$ be a smooth curve endowed with a degree $d$ line bundle $\mathcal{L}$ whose sections embed $C$ in
$\mathbb{P}^{k-1}$ as a projectively normal curve.
Using Theorem \ref{HNfort}, choose a finite morphism of degree $e$ from a smooth curve $f:C'\to C$ such that the Harder-Narasimhan filtration
 of $f^{*}\Syz_C(\mathcal{L})$ is strong.
Let $r_i$ and $e\delta_i$ be the ranks and degrees of the graded pieces of this filtration (so that $r_i$ and $\delta_i$ are independent of $f$).
Then:
$$e_{\HK}(C,\mathcal{L})=\frac{1}{2d}\sum_i\frac{\delta_i^2}{r_i}-\frac{kd}{2}.$$
\end{thm}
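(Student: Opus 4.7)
The plan is to follow the strategy of Brenner and Trivedi: convert the computation of $\length(A/\mathfrak{m}^{[q]})$ into a cohomological one involving Frobenius twists of $\mathcal{S}:=\Syz_C(\mathcal{L})$, and then evaluate it via the strong Harder--Narasimhan filtration. Writing $q=p^e$ and fixing a basis $\sigma_1,\dots,\sigma_k$ of $H^0(C,\mathcal{L})$, the ideal $\mathfrak{m}^{[q]}$ is generated by $\sigma_1^q,\dots,\sigma_k^q$, and the Frobenius pullback of the defining sequence of $\mathcal{S}$, twisted by $\mathcal{L}^m$, reads
$$0\to F^{e*}\mathcal{S}\otimes\mathcal{L}^m\to \mathcal{L}^{m-q}\otimes\mathcal{O}_C^{\oplus k}\to \mathcal{L}^m\to 0.$$
Taking global sections, for $m\geq q$ this yields
$\dim_{\mathbbm{k}}(A/\mathfrak{m}^{[q]})_m=h^0(\mathcal{L}^m)-k\,h^0(\mathcal{L}^{m-q})+h^0(F^{e*}\mathcal{S}\otimes\mathcal{L}^m),$
while for $m<q$ the quotient is simply $H^0(\mathcal{L}^m)$. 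Summing over $m$ reduces $\length(A/\mathfrak{m}^{[q]})$ to $h^0$'s of line bundles, which are handled by Riemann--Roch, plus the term $\sum_m h^0(F^{e*}\mathcal{S}\otimes\mathcal{L}^m)$.

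I would next use the strong HN filtration to evaluate the remaining sum. Pulling back by $f$ and applying $F^{e*}$, the strong HN filtration of $f^*\mathcal{S}$ becomes one of $f^*F^{e*}\mathcal{S}=F^{e*}f^*\mathcal{S}$ whose graded pieces have rank $r_i$ and degree $eq\delta_i$. For a strongly semistable bundle $\mathcal{G}$ of slope $\mu$ on $C'$, semistability forces $h^0(\mathcal{G}\otimes f^*\mathcal{L}^m)=0$ whenever $\mu+med<0$ and $h^1=0$ as soon as $\mu+med>2g(C')-2$; between these thresholds lies a transition window of width $O(1)$ uniform in $q$, outside of which both cohomology groups are given by Riemann--Roch. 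Using $\chi(C',f^*\mathcal{H})=e\cdot\chi(C,\mathcal{H})$ together with the vanishing of $h^1$ for sufficiently positive twists, one has $h^0(C,\mathcal{H})=\chi(C',f^*\mathcal{H})/e$, which lets me transfer the piecewise computations from $C'$ back to $C$ with explicit error control.

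To conclude, I would carry out the asymptotic analysis. The transition window of the $i$-th graded piece $F^{e*}Q_i$ is centred at $m\sim -q\delta_i/r_i$ with width $O(1)$ independent of $q$; summing the Riemann--Roch contributions over each interval between consecutive transition points is a matter of evaluating arithmetic sums, and yields a closed expression
$$\length(A/\mathfrak{m}^{[q]})=\left(\frac{1}{2d}\sum_i\frac{\delta_i^2}{r_i}-\frac{kd}{2}\right)q^2+O(q).$$
Dividing by $q^2$ and letting $e\to\infty$ then gives the stated formula for $e_{\HK}(C,\mathcal{L})$.

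The hard part will be the bookkeeping in the transition windows and in the passage between $C$ and $C'$: one must check that the contributions from the $O(1)$-wide transition zones and from the low-degree discrepancy between $h^0(C,\mathcal{H})$ and $h^0(C',f^*\mathcal{H})$ are absorbed into the $O(q)$ term and do not perturb the leading $q^2$ coefficient. A secondary subtlety is to verify that the strong HN filtration of $f^*\mathcal{S}$ pulls back under $F^{e*}$ to the strong HN filtration of $f^*F^{e*}\mathcal{S}$, which relies on the preservation of strong semistability by Frobenius and on the uniqueness of the HN filtration.
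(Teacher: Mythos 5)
The paper offers no proof of this statement: it is quoted as a theorem of Brenner and Trivedi with a bare citation, and is only \emph{used} (together with Corollary \ref{coross} and Proposition \ref{unstable}) to deduce Theorem \ref{HKintro}. Your sketch reconstructs the argument of those references, and its skeleton is sound: projective normality identifies $(A/\mathfrak{m}^{[q]})_m$ with the cokernel of $\bigoplus_i H^0(\mathcal{L}^{m-q})\xrightarrow{\cdot\sigma_i^q}H^0(\mathcal{L}^m)$, the Frobenius pullback of the syzygy sequence converts its dimension into $h^0(\mathcal{L}^m)-k\,h^0(\mathcal{L}^{m-q})+h^0(F^{e*}\mathcal{S}\otimes\mathcal{L}^m)$, and the last term is evaluated asymptotically through the strong Harder--Narasimhan filtration, whose compatibility with $F^{e*}$ you correctly reduce to the preservation of strong semistability and uniqueness of the HN filtration. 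Two points need repair, though neither disturbs the leading $q^2$ coefficient. First, the identity $\chi(C',f^*\mathcal{H})=e\,\chi(C,\mathcal{H})$ is false unless $f$ is \'etale; the discrepancy is $\operatorname{rk}(\mathcal{H})\bigl((1-g_{C'})-e(1-g_C)\bigr)$. The clean way to pass from $C'$ back to $C$ is: below the threshold use the injection $H^0(C,\mathcal{G})\hookrightarrow H^0(C',f^*\mathcal{G})=0$ coming from $\mathcal{O}_C\hookrightarrow f_*\mathcal{O}_{C'}$; above it apply the same injection to $\mathcal{G}^\vee\otimes\omega_C$ and Serre duality to get $h^1(C,\mathcal{G})=0$; then use Riemann--Roch on $C$ itself with $\deg_C\mathcal{G}=\deg_{C'}f^*\mathcal{G}/e$. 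Since the resulting per-term error against your formula is a bounded constant and there are $O(q)$ terms, your $O(q)$ absorption still works, but as written the intermediate identity is wrong. Second, the transition of the $i$-th graded piece is centred at $m\approx -q\delta_i/(r_i d)$, not $-q\delta_i/r_i$. With these corrections the bookkeeping closes up, and the outcome is consistent with the paper's own application: for $\mathcal{S}$ strongly semistable the formula collapses to $kd/(2(k-1))$, which gives $15/4$ for $k=5$, $d=6$ as in Theorem \ref{HKintro}(i).
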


Applying this theorem using Corollary \ref{coross} and Proposition \ref{unstable}, we get:

\begin{thm}
\label{thHK}
Let $C$ be the curve of genus $2$ with equation $Z^2=X^6+Y^6$. Then:
\begin{enumerate}[(i)]
\item $e_{\HK}(C,\omega_C^{\otimes 3})=\frac{15}{4}$ if $p\equiv\pm 1[8]$,
\item $e_{\HK}(C,\omega_C^{\otimes 3})=\frac{15}{4}+\frac{1}{4p^2}$ if $p\equiv\pm 3[8]$.
\end{enumerate}
\end{thm}


\bibliographystyle{plain}
\bibliography{biblio}

\end{document}